\newtheorem{theo}{Theorem}[section]
\newtheorem{lem}[theo]{Lemma}
\newtheorem{exa}[theo]{Example}
\newtheorem{prop}[theo]{Proposition}
\newtheorem{cor}[theo]{Corollary}
\newtheorem{df}[theo]{Definition}
\newtheorem{rmk}[theo]{Remark}
\newcommand{\address}[1]{\gdef\@address{#1}}
\newcommand{\email}[1]{\gdef\@email{\url{#1}}}
\newcommand{\@endstuff}{\par\vspace{\baselineskip}\noindent\small
\begin{tabular}{@{}l}\scshape\@address\\{E-mail address:} \@email \end{tabular}}
\title{Generalised Local Fractional Hermite-Hadamard Type Inequalities on Fractal Sets }
\author{
Peter Olamide Olanipekun }
\address{Department of Mathematics, The University of Auckland, Auckland 1010, New Zealand.\\ E-mail address: \color{blue} olanipekunp@gmail.com}
\email{peter.olanipekun@auckland.ac.nz}
\date{}
\begin{document}\sloppy
\maketitle

\abstract{Fractal geometry and analysis constitute a growing field, with numerous applications,  based on the principles of fractional calculus. Fractals sets are highly effective in improving convex inequalities and their generalisations. In this paper, we establish a generalized notion of convexity. By defining generalised $\phi_{h-s}$ convex functions, we extend the well known concepts of generalised convex functions, $P$-functions, Breckner $s$-convex  functions, $h$-convex functions amongst others. With this definition, we prove Hermite-Hadamard type inequalities for generalized $\phi_{h-s}$ convex mappings onto fractal sets. Our results are then applied to probability theory. }

\section{Introduction}

Recent advancements have demonstrated the applications of fractional calculus in addressing real-life problems and enhancing a better understanding of intricate situations across various scientific fields. These applications span  fields, including, but not limited to, probability models \cite{oraby, lau}, physical modeling and experiments \cite{pot, boh, ciu}, control and dynamical systems \cite{liy, du, feliu}, image processing \cite{tian, pu}, robotics and signal processing \cite{gonz, dua}. Fractional calculus has contributed significantly to medical and biomedical research especially in neuroscience \cite{ris, kara, ingo, shi, pinto1, pinto2, val}. Recent investigations extend to economic risk analysis \cite{ro}, wind turbulence \cite{haj}, and ongoing discussions across various disciplines.  
It has been used to improve the accuracy of several models in science and technology, finance and economics, medicine and engineering etc. Models of this nature have proven to exhibit greater efficiency compared to integer-order models. Most research in this direction have considered generalising existing theories and results to fractional order via fractional calculus. For instance, fractional Fourier transform \cite{alie}, numerical and analytical fractional partial differential equations \cite{owo, rod}, $L^p$ theory and fractional order estimates \cite{muc},  operator theory \cite{kra}, fractional calculus of variations \cite{alme, del}, generalised solutions to fractional div-curl systems \cite{delg}  and more recently, their application to address challenges related to COVID-19 \cite{pand}. For more details on fractional calculus see the books \cite{old, miller}.

On the other hand, fractal geometry and analysis constitute a burgeoning area of study grounded in the principles of fractional calculus. Although the term ``fractal" was introduced by Beno\^it Mandelbrot in 1975 in his memoir,  later revised in 1982 \cite{man}, there is an extensive historical underpinning to fractals. Its rich history goes back to the time when mathematicians thought about certain curves and surfaces that differ conceptually from the classical ones in  geometry. Fractals occur in nature in various forms and scales, exhibiting self-similar patterns at different magnifications. The geometry and analysis of fractals have been well fairly examined in literature. In a study on fractal dimension, \cite{tri} established some remarkable properties for the complement of a fractal set by relating its Besicovitch-Taylor index to an exterior dimension with specific applications in porous materials, blood network and the boundary of a diffusion process. Although fractals appears in several areas in mathematics and statistics such as geometric measure theory, topology, harmonic analysis, differential equations, numerical analysis, time series analysis \cite{frank, por, kra, miller, owo, part}, fractal mathematics is still yet to be fully explored.

Inequalities are basic building blocks in mathematics and other related fields. Several works focused on integral inequalities aim to extend, refine, improve, and generalize known inequalities, while also introducing new ones. Fractional integral inequalities are useful tools in proving well-posedness and regularity of solutions to fractional partial differential equations \cite{wang}. One of the most celebrated inequalities for convex functions is the Hermite-Hadamard inequality named after Charles Hermite and Jacques Hadamard. Let $f:I \subset\mathbb{R} \rightarrow\mathbb{R}$ be a convex function defined on an interval $I$, then for $a,b\in I$, 
$$f\left(\frac{a+b}{2}\right)\leq \frac{1}{b-a} \int_a^b f(x)\, dx\leq \frac{f(a)+f(b)}{2}.$$

\section{Generalised Form of Convexity on Fractal Set}
In \cite{youn}, the author modified the concept of convex sets by introducing and developing the notion of $E$-convex sets, which we will refer to as the $\phi$-convex sets in order to align with our notation.  The author also defined $\phi$-convex functions \cite{youn} and established some interesting results which have been applied to convex programming problems  \cite{ant, fajardo, piao, syau} with notable extensions on Riemannian manifolds \cite{iqbal, kili}. Since then, several definitions extending the concept of convexity within various contexts have been provided in literature. 
 We recall the following definition from \cite{peterf}.
\begin{df}\label{fdg}
Let $I$ be an interval of $\mathbb{R}$ and $[a,b]\subseteq I$. A function $f:I\rightarrow \mathbb{R}^\alpha$ is said to be  $\phi_{h-s}$ convex if for all $x,y\in [a,b]$ and $s,t\in[0,1]$
\begin{align}
f(t\phi(x)+ (1-t)\phi(y)) \leq \left ( \frac{t}{h(t)} \right)^{ s} f(\phi(x)) + \left ( \frac{1-t}{h(1-t)} \right)^{s} f(\phi(y)).  \nonumber
\end{align}
\end{df}

In general, $\phi_{h-s}$ convex functions can be defined on $\phi$-convex set in $\mathbb{R}^n$ \cite{youn}. We remark that with $s=1$ and $h(t)=1$ in Definition \ref{fdg}, for all $s,t\in[0,1]$, we recover Definition 3.1 in \cite{youn} and the results therein.  Some properties of $\phi_{h-s}$ convexity were established in \cite{peterf} and several inequalities that generalise those of Jensen and Schur  were derived for some non-negative supermultiplicative functions. Later, the class of harmonically $\phi_{h-s}$ convex functions was given in \cite{peterfacta} and applied to the Hermite-Hadamard inequality with several analytical implications for special means of real numbers. Also, in \cite{peterfacta}, the authors proved some Hermite-Hadamard-type inequalities by applying the fractional integral operator to $\phi_{h-s}$ convex functions. The relationship between $\phi_{h-s}$ convex functions and their harmonic counterparts was explored in \cite{ojo}, where several Pachpatte-type inequalities were established under a ratio-bound condition for the harmonically $\phi_{h-s}$ convex functions.

Variations of Definition \ref{fdg} have been  used to establish Hermite-Hadamard-type inequalities on time scales \cite{fagbemigun1, fagbemigun2}. We adapt the above definition to fractal sets as follows.

\begin{df} \label{definition}
Let $I$ be an interval of $\mathbb{R}$ and let $[a,b]\subseteq I$. Let $\phi:[a,b]\rightarrow \mathbb{R}$ and $h:[0,1]\rightarrow (0,\infty).$  
A function $f:I\rightarrow \mathbb{R}^\alpha$ is said to be generalised $\phi_{h-s}$ convex if for all $x,y\in [a,b]$ and $s,t\in[0,1]$
\begin{align}
f(t\phi(x)+ (1-t)\phi(y)) \leq \left ( \frac{t}{h(t)} \right)^{\alpha s} f(\phi(x)) + \left ( \frac{1-t}{h(1-t)} \right)^{\alpha s} f(\phi(y)).  \label{deff}
\end{align}
\end{df}

If the inequality \eqref{deff} is reversed, then $f$ is said to be generalised $\phi_{h-s}$ concave, and of course $f$ is generalised $\phi_{h-s}$ convex if and only if $-f$ is generalised $\phi_{h-s}$ concave. If the inequality in \eqref{deff} is strict whenever $x$ and $y$ are distinct and $s,t\in[0,1]$, then $f$ is a {\it strictly generalised} $\phi_{h-s}$ convex.
\begin{rmk} \label{r1}
It is important to mention that the generalised class of $\phi_{h-s}$ convex function given in Definition \ref{definition} unifies several interesting generalised versions of convex functions \cite{dragomir, pengxu, mo, sun, tomar} some of which are highlighted below.
\begin{enumerate}[(i)]
\item If $s=0$ and $\phi(x)=x$, then $f$ is a generalised $P$-function \cite{dragomir}.
\item If $s=1$, $h(t)=1$ for all $t\in [0,1]$, and $\phi(x)=x$, then $f$ is a generalised convex function \cite{tomar}
\item If $h(t)=1$ for all $t\in [0,1]$,  $\phi(x)=x$ and $t^{s}+(1-t)^s=1$ with $s\in(0,1)$,  then $f$ is generalised Breckner $s$-convex in the first sense \cite{mo}. 
\item Let $s\in(0,1)$, if $h(t)=1$ for all $t\in [0,1]$,        and $\phi(x)=x$, then $f$ is generalised Breckner $s$-convex in the second sense \cite{mo}.
\item Define $\tilde{h}(t):= \frac{t}{h(t)}$, if $s=1$ and $\phi(x)=x$ then $f$ is a generalised $\tilde h$-convex function \cite{pengxu}.
\item If $h(t)=t^2$ for all $t\in [0,1]$ and $\phi(x)=x$ then $f$ is generalised $s$-Godunova-Levin function.
\item If $h(t)=2\sqrt{t-t^2}$ for all $t\in [0,1]$, then $f$ is generalised $s$-MT convex. Additionally, if $s=1$, then $f$ is generalised MT-convex.

\end{enumerate}

\end{rmk}
All the classes of convex functions listed above are \textit{generalised} in the sense that they map intervals of $\mathbb{R}$ into fractal set. Throughout this work, we will use the function $\rho_{ s}:[0,\infty)\rightarrow[0,\infty)$ defined by $\rho_s(t):=\left(\frac{t}{h(t)}\right)^{ s}$ with $h\not\equiv 0$, and the notation $\rho_{\alpha s}(t):= (\rho_s(t))^\alpha=\left(\frac{t}{h(t)}\right)^{\alpha s}$ . In most cases, we will restrict $\rho_{s}$ on the interval $[0,1]$. There are several interesting properties of this new notion of generalised convex function some of which we shall now mention. To this end, the following remarks are in place.

\begin{rmk} 
\begin{enumerate}[(i)]
\item  Let $h:[0,1]\rightarrow  (0,\infty)$ satisfies $\rho_s(t)\geq t$ for all $s,t\in[0,1]$. If $f:I\rightarrow\mathbb{R}^\alpha$ is generalised convex on $I$, then 
$$f(t\phi(x)+ (1-t)\phi(y)) \leq t^{\alpha} f(\phi(x))+ (1-t)^\alpha f(\phi(y))\leq \rho_{\alpha s}(t) f(\phi(x))+ \rho_{\alpha s}(1-t)f(\phi(y))$$
which implies that $f$ is also generalised $\phi_{h-s}$ convex. An example is the function $f:I\rightarrow \mathbb{R}$ defined by $f(x) =x^{\alpha }$. It is not difficult to see that
\begin{align}
f(t\phi(x)+ (1-t)\phi(y))&= t^{\alpha }(\phi(x))^{\alpha } +(1-t)^{\alpha }(\phi(y))^{\alpha }  \nonumber
\\& = t^\alpha f(\phi(x)) +(1-t)^\alpha f(\phi(y))  \nonumber
\\&\leq \rho_{\alpha s}(t) f(\phi(x)) + \rho_{\alpha s}(1-t) f(\phi(y)).  \nonumber
\end{align}
The Mittag-Leffler function \cite{pengxu, yang}  defined on $\mathbb{R}^\alpha$ by $E_\alpha(x^\alpha)=\sum_{k=0}^\infty \frac{x^{\alpha k}}{\Gamma(1+k\alpha)}$ for all $x\in\mathbb{R}$ is generalised convex \cite{tomar}, and hence by the remark above, a generalised $\phi_{h-s}$ convex function provided $\rho_s(t)\geq t$ for all $s,t\in[0,1]$.

\item Consider the functions $h_1, h_2:[0,1]\rightarrow (0,\infty)$ 
with $h_2(t)\leq h_1(t)$ for all $t\in [0,1]$. If $f$ is generalised $\phi_{h_1-s}$ convex then $f$ is generalised  $\phi_{h_2-s}$ convex. The converse is not true.

\item Let $f,g: I\rightarrow\mathbb{R}^\alpha$ be two generalised $\phi_{h-s}$ convex function and let $\lambda^\alpha\in\mathbb{R}^\alpha$ with $\lambda >0$ in $\mathbb{R}$.  Then the functions $f+g$ and $\lambda^\alpha f$ are both generalised $\phi_{h-s}$ convex functions on $I$.

\item Let $f: I\rightarrow\mathbb{R}^\alpha$ be a generalised $\phi_{h-s}$ convex function. If $g:\mathbb{R}\rightarrow I$ is a linear map then the composition $f\circ g:\mathbb{R}\rightarrow\mathbb{R}^\alpha$ is also generalised $\phi_{h-s}$ convex.

\item Let $f: I\rightarrow\mathbb{R}^\alpha$ be a generalised $\phi_{h-s}$ convex function. If $f$ is increasing and $g:\mathbb{R}\rightarrow I$ is convex on $\mathbb{R}$, then the composition $f\circ g:\mathbb{R}\rightarrow\mathbb{R}^\alpha$ is also generalised $\phi_{h-s}$ convex.

\end{enumerate}
\end{rmk}

We state yet another property in  the following Proposition whose proof is similar in spirit to that of Theorem 3.7 in \cite{peternfaa} and Proposition 1 (and Corollary 1) in \cite{peterfacta}.

\begin{prop}
Let $f$ be a generalised $\phi_{h_1-s}$ convex  function and $g$ be a generalised $\phi_{h_2-s}$ convex function. Let $h(t):= \max\{h_1(t), h_2(t)\}$ with $h(t)+ h(1-t)\leq c$ for a fixed positive constant $c$. If $f$ and $g$ are similarly ordered, then $fg$ is a generalised $\phi_{ch-s}$ convex function.

\end{prop}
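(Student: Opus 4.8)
The plan is to mirror the classical product-of-generalised-convex-functions argument referred to just above the statement. Set $z:=t\phi(x)+(1-t)\phi(y)$ and abbreviate $\rho^{(1)}_{\alpha s}(\tau):=(\tau/h_1(\tau))^{\alpha s}$, $\rho^{(2)}_{\alpha s}(\tau):=(\tau/h_2(\tau))^{\alpha s}$, and $\rho_{\alpha s}(\tau):=(\tau/h(\tau))^{\alpha s}$, where $h=\max\{h_1,h_2\}$. Applying Definition~\ref{definition} to $f$ (with $h_1$) and to $g$ (with $h_2$) gives the two pointwise estimates
\[
f(z)\le \rho^{(1)}_{\alpha s}(t)\,f(\phi(x))+\rho^{(1)}_{\alpha s}(1-t)\,f(\phi(y)),\qquad g(z)\le \rho^{(2)}_{\alpha s}(t)\,g(\phi(x))+\rho^{(2)}_{\alpha s}(1-t)\,g(\phi(y)).
\]
Since the conclusion concerns the product $fg$, the argument will (implicitly, and necessarily) use that $f$ and $g$ are non-negative on $\phi([a,b])$, so that these two inequalities may be multiplied termwise in $\mathbb{R}^\alpha$.

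First I would multiply the two estimates and expand the right-hand side into four summands: two ``diagonal'' terms carrying $(fg)(\phi(x))$ and $(fg)(\phi(y))$, with coefficients $\rho^{(1)}_{\alpha s}(t)\rho^{(2)}_{\alpha s}(t)$ and $\rho^{(1)}_{\alpha s}(1-t)\rho^{(2)}_{\alpha s}(1-t)$, and two ``mixed'' terms carrying $f(\phi(x))g(\phi(y))$ and $f(\phi(y))g(\phi(x))$, with coefficients $\rho^{(1)}_{\alpha s}(t)\rho^{(2)}_{\alpha s}(1-t)$ and $\rho^{(1)}_{\alpha s}(1-t)\rho^{(2)}_{\alpha s}(t)$. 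Using $h_1(\tau),h_2(\tau)\le h(\tau)$ I would bound each of the four coefficients by the matching product formed from $\rho_{\alpha s}(t)$ and $\rho_{\alpha s}(1-t)$, so that in particular both mixed coefficients get replaced by the single value $\rho_{\alpha s}(t)\rho_{\alpha s}(1-t)$. Then the hypothesis that $f$ and $g$ are similarly ordered, in the form $f(\phi(x))g(\phi(y))+f(\phi(y))g(\phi(x))\le (fg)(\phi(x))+(fg)(\phi(y))$, lets me fold the two mixed terms into the diagonal ones. Collecting the coefficients of $(fg)(\phi(x))$ and of $(fg)(\phi(y))$ then exhibits a common factor governed by $h(t)+h(1-t)$; bounding this by $c$ recasts the resulting estimate as precisely the inequality \eqref{deff} written for $fg$ with $h$ replaced by $ch$, i.e.\ $fg$ is generalised $\phi_{ch-s}$ convex.

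The routine parts are the expansion itself and the two substitutions — the $\max$-domination of the $h_i$ and the similarly-ordered inequality. The step demanding care, and the one most likely to hide a subtlety, is the bookkeeping of the four coefficients (ensuring the two mixed ones really collapse to the same value before invoking ``similarly ordered'', and that the $h(t)+h(1-t)$ factor emerges cleanly) together with the implicit positivity of $f$ and $g$ that legitimises multiplying the two defining inequalities in $\mathbb{R}^\alpha$. This is exactly the mechanism of Theorem~3.7 in \cite{peternfaa} and Proposition~1 (and Corollary~1) in \cite{peterfacta}; alternatively, the statement can be obtained by first establishing the analogue for products of $h$-convex mappings and then re-reading it through Remark~\ref{r1} and the monotonicity in $h$ recorded above.
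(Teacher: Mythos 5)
You should know at the outset that the paper supplies no proof here: the proposition is stated with proof ``Omitted'', deferring to Theorem 3.7 of \cite{peternfaa} and Proposition 1 of \cite{peterfacta}, so your proposal can only be measured against that intended classical product argument — which is indeed the strategy you outline (and your flagging of the implicit nonnegativity of $f$ and $g$ is right). The trouble is that the two steps you call routine are precisely the ones that fail under Definition~\ref{definition}, because $h$ enters the convexity coefficient through $\rho_{\alpha s}(\tau)=(\tau/h(\tau))^{\alpha s}$, i.e.\ in the \emph{denominator}. Since $h=\max\{h_1,h_2\}\ge h_i$, one has $\rho^{(i)}_{\alpha s}(\tau)=(\tau/h_i(\tau))^{\alpha s}\ge\rho_{\alpha s}(\tau)$: the max-domination runs the wrong way, so the four expanded coefficients cannot be bounded above by the matching products of $\rho_{\alpha s}(t)$ and $\rho_{\alpha s}(1-t)$ (for the diagonal terms you would need $h(t)^2\le h_1(t)h_2(t)$, which fails for the maximum unless $h_1=h_2$; the coefficient function dominating both $\rho^{(1)}_{\alpha s}$ and $\rho^{(2)}_{\alpha s}$ is the one built from $\min\{h_1,h_2\}$). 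Moreover, even in the case $h_1=h_2=h$, after folding the mixed terms via similar ordering the common factor that emerges is $\rho_{\alpha s}(t)+\rho_{\alpha s}(1-t)=(t/h(t))^{\alpha s}+((1-t)/h(1-t))^{\alpha s}$, not $h(t)+h(1-t)$; bounding it by a constant $K$ gives $fg(z)\le K\,\rho_{\alpha s}(t)\,(fg)(\phi(x))+K\,\rho_{\alpha s}(1-t)\,(fg)(\phi(y))$, which is generalised $\phi_{H-s}$ convexity with $H=K^{-1/(\alpha s)}h$, whereas the claimed $\phi_{ch-s}$ convexity would require the \emph{smaller} coefficient $c^{-\alpha s}\rho_{\alpha s}(\tau)$ — something the hypothesis $h(t)+h(1-t)\le c$ cannot deliver (note that enlarging $c$ weakens the hypothesis while strengthening the asserted conclusion).

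No amount of bookkeeping can close this, because the statement read literally against Definition~\ref{definition} is false: take $s=1$, $\phi(x)=x$, $h_1=h_2\equiv1$, $f=g$ with $f(x)=x^{\alpha}$ on $[0,2]$ (generalised $\phi_{h-s}$ convex by the paper's own remark, and $f,g$ are similarly ordered); then $h(t)+h(1-t)=2$, so $c=2$ is admissible, and the asserted $\phi_{ch-s}$ convexity of $fg$ at $x=y=1$, $t=\tfrac12$ reads $(fg)(1)\le\left(\tfrac14\right)^{\alpha}(fg)(1)+\left(\tfrac14\right)^{\alpha}(fg)(1)$, i.e.\ $1^{\alpha}\le\left(\tfrac12\right)^{\alpha}$ in $\mathbb{R}^{\alpha}$, which is false. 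So the defect is not in your overall plan — it is the correct mechanism for the classical similarly-ordered product theorem, where $h(t)$ itself is the coefficient and $h=\max$ is the right choice — but in transplanting that statement's hypotheses and conclusion verbatim into this setting. Your argument becomes correct if the proposition is restated in terms of the coefficient functions: work with $\rho_{\alpha s}$ built from $\min\{h_1,h_2\}$, assume a bound on $\rho_{\alpha s}(t)+\rho_{\alpha s}(1-t)$, and conclude the correspondingly rescaled convexity; as written, the max-domination step and the final identification of the factor with $h(t)+h(1-t)\le c$ (hence with ``$ch$'') are genuine gaps.
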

\begin{proof}
Omitted.
\end{proof}

\begin{prop}
Let $f_n: I \rightarrow \mathbb{R}^\alpha$, $n\in\mathbb{N}$, be a  sequence of generalised $\phi_{h-s}$ convex functions converging pointwise to a function $f:I\rightarrow \mathbb{R}^\alpha$. Then $f$ is generalised convex on $I$.

\end{prop}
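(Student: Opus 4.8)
The plan is to simply pass to the limit in the defining inequality \eqref{deff}, taken at a fixed triple of data. Fix $x,y\in[a,b]$ and $s,t\in[0,1]$, and write $u:=t\phi(x)+(1-t)\phi(y)$, $p:=\phi(x)$, $q:=\phi(y)$; note that $u,p,q\in I$ are points that do \emph{not} depend on $n$. For each $n\in\mathbb{N}$, since $f_n$ is generalised $\phi_{h-s}$ convex we have
\[
f_n(u)\ \leq\ \rho_{\alpha s}(t)\, f_n(p)\ +\ \rho_{\alpha s}(1-t)\, f_n(q),
\]
where $\rho_{\alpha s}(t)=\bigl(t/h(t)\bigr)^{\alpha s}$ and $\rho_{\alpha s}(1-t)=\bigl((1-t)/h(1-t)\bigr)^{\alpha s}$ are fixed elements of $\mathbb{R}^\alpha$.

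First I would invoke the pointwise convergence $f_n\to f$ at the three fixed points $u$, $p$, $q$, which gives $f_n(u)\to f(u)$, $f_n(p)\to f(p)$ and $f_n(q)\to f(q)$ in $\mathbb{R}^\alpha$. Next I would use the continuity of the algebraic operations on the fractal set $\mathbb{R}^\alpha$: multiplication by the fixed scalars $\rho_{\alpha s}(t)$ and $\rho_{\alpha s}(1-t)$ is continuous, and addition on $\mathbb{R}^\alpha$ is continuous, so the right-hand side above converges to $\rho_{\alpha s}(t)\,f(p)+\rho_{\alpha s}(1-t)\,f(q)$. Finally, since the order $\leq$ on $\mathbb{R}^\alpha$ is closed (a weak inequality between convergent sequences is preserved in the limit), letting $n\to\infty$ yields
\[
f\bigl(t\phi(x)+(1-t)\phi(y)\bigr)\ \leq\ \rho_{\alpha s}(t)\, f(\phi(x))\ +\ \rho_{\alpha s}(1-t)\, f(\phi(y)),
\]
and as $x,y,s,t$ were arbitrary this is exactly \eqref{deff} for $f$; specialising $h\equiv 1$, $s=1$, $\phi=\mathrm{id}$ recovers generalised convexity of $f$ in the narrow sense.

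The only point requiring care — and the step I would single out as the main (if minor) obstacle — is justifying that limits behave as expected in $\mathbb{R}^\alpha$: namely that fractal addition and fractal scalar multiplication are jointly continuous and that the order relation on $\mathbb{R}^\alpha$ is compatible with limits. Both facts are part of the standard structure of the real line fractal set in local fractional calculus, so I would state them as recalled properties and keep the rest of the argument as the short three-line passage to the limit above. Everything else (the identification of the fixed arguments, the use of pointwise convergence) is routine.
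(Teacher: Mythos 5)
Your proposal is correct and follows essentially the same route as the paper: fix $x,y,t$, write the defining inequality for $f_n$, and pass to the limit using pointwise convergence, with the limit of the right-hand side being $\rho_{\alpha s}(t)f(\phi(x))+\rho_{\alpha s}(1-t)f(\phi(y))$. Your extra remarks on the continuity of the fractal operations and the closedness of the order merely make explicit what the paper's one-line limit argument leaves implicit.
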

\begin{proof}
Let $x,y\in I$ and $0\leq t\leq 1$. It suffices to see that the generalised  $\phi_{h-s}$ convexity of $f_n$ implies
\begin{align}
f(t\phi(x)+(1-t)\phi(y)) \leq \lim_{n\rightarrow \infty} \left( \rho_{\alpha s}(t) f_{n}(\phi(x) +\rho_{\alpha s}f_n(\phi(y))   \right)  \nonumber
 = \rho_{\alpha s}(t) f(\phi(x)) +\rho_{\alpha s}(1-t) f(\phi(y))  \nonumber.
\end{align}
\end{proof}

\begin{theo}\label{t1}
Suppose $s\in(0,1)$. Define $K_\alpha(\lambda, s):=\rho_{\alpha s}(\lambda^{\frac{1}{s}})+\rho_{\alpha s}((1-\lambda)^{\frac{1}{s}})$ where $0\leq \lambda\leq 1$.

If $f$ is generalised $\phi_{h-s}$ convex, then for $0<r_1\leq r_2<\infty$, we have $f(r_1) \leq K_\alpha(\lambda, s) f(r_2)$. Moreover, if $K_\alpha(\lambda, s)\leq1^\alpha$, then $f$ is non decreasing on $(0,\infty)$.
\end{theo}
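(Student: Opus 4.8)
The plan is to read the bound off from a single use of the defining inequality \eqref{deff}, after representing $r_1$ by an appropriate combination built out of $r_2$. First I would fix $0<r_1\le r_2<\infty$ and exploit the standing hypothesis $s\in(0,1)$: then $\tau\mapsto\tau^{1/s}$ is an increasing bijection of $[0,1]$ onto itself, so, since $r_1/r_2\in(0,1]$, there is a unique $\lambda=\lambda(r_1,r_2)\in[0,1]$ with $\lambda^{1/s}=r_1/r_2$. For this $\lambda$ one has $r_1=\lambda^{1/s}r_2+(1-\lambda^{1/s})\,w$, where $w$ is a fixed reference point of the domain (an endpoint, e.g. $w=0$ if $0\in I$, otherwise handled by a limiting argument). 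Substituting $\phi(x)=r_2$, $\phi(y)=w$ and $t=\lambda^{1/s}$ into \eqref{deff} gives
\begin{align}
f(r_1)\le \rho_{\alpha s}\!\left(\lambda^{1/s}\right) f(r_2)+\rho_{\alpha s}\!\left(1-\lambda^{1/s}\right) f(w),\nonumber
\end{align}
whose first weight is already the leading summand of $K_\alpha(\lambda,s)$.

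The other ingredient is the sign information $f\ge 0^\alpha$ on $I$ --- a property of this function class that one checks by putting $x=y$ and $t=\tfrac12$ in \eqref{deff}, which gives $f(\phi(x))\le\bigl(2\rho_{\alpha s}(\tfrac12)\bigr)f(\phi(x))$ with $2\rho_{\alpha s}(\tfrac12)\ge 1^\alpha$ since $s\in(0,1)$. Using this together with $\rho_{\alpha s}\ge 0^\alpha$, and discarding the remaining term $\rho_{\alpha s}(1-\lambda^{1/s})f(w)$ (nonpositive at the reference value), the previous inequality improves to
\begin{align}
f(r_1)\le \rho_{\alpha s}\!\left(\lambda^{1/s}\right) f(r_2)\le \rho_{\alpha s}\!\left(\lambda^{1/s}\right) f(r_2)+\rho_{\alpha s}\!\left((1-\lambda)^{1/s}\right) f(r_2)=K_\alpha(\lambda,s)\,f(r_2),\nonumber
\end{align}
which is the first assertion.

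For the ``moreover'' part I would combine the first assertion with the extra hypothesis. Assuming $K_\alpha(\lambda,s)\le 1^\alpha$ and taking arbitrary $0<r_1\le r_2$, multiplying this inequality by $f(r_2)\ge 0^\alpha$ (admissible in $\mathbb{R}^\alpha$ by the nonnegativity above) gives $K_\alpha(\lambda,s)f(r_2)\le 1^\alpha f(r_2)=f(r_2)$, hence $f(r_1)\le K_\alpha(\lambda,s)f(r_2)\le f(r_2)$. Since $r_1\le r_2$ were arbitrary, $f$ is non-decreasing on $(0,\infty)$.

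The step I expect to be the real obstacle is the transition, in the second paragraph, from the honest consequence of \eqref{deff} --- which carries the weight $\rho_{\alpha s}(1-\lambda^{1/s})$ on the reference term --- to the stated bound, which instead features $\rho_{\alpha s}((1-\lambda)^{1/s})$: for $s<1$ these are genuinely different ($1-\lambda^{1/s}\ge(1-\lambda)^{1/s}$). Getting past this cleanly --- either by discarding a controlled remainder using the sign of $f$ at the boundary, or by invoking sub-/super-multiplicativity of $h$ (the structural properties already used in the paper for non-negative supermultiplicative $h$) to compare $\rho_s$ of a sum with the sum of the individual values --- together with making the reference-point argument rigorous when $0\notin I$, is where the care is needed; the rest is routine manipulation.
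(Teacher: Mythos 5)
There is a genuine gap, and it sits exactly where you yourself predicted. From \eqref{deff} with $t=\lambda^{1/s}$ you correctly obtain $f(r_1)\le \rho_{\alpha s}(\lambda^{1/s})f(r_2)+\rho_{\alpha s}(1-\lambda^{1/s})f(w)$, but the passage to $f(r_1)\le\rho_{\alpha s}(\lambda^{1/s})f(r_2)$ by ``discarding'' the second term is not available: $\rho_{\alpha s}\ge 0^\alpha$, and by your own nonnegativity claim $f(w)\ge 0^\alpha$, so the discarded term is nonnegative and dropping it moves the right-hand side in the wrong direction; the parenthetical ``nonpositive at the reference value'' is unsupported and contradicts the nonnegativity you invoke two lines later (nothing in the hypotheses forces $f(0)\le 0^\alpha$; in Example \ref{e1}, $f(0)=\beta^\alpha$ can be positive). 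The nonnegativity argument itself is also flawed: $x=y$, $t=\tfrac12$ gives $\bigl(2^\alpha\rho_{\alpha s}(\tfrac12)-1^\alpha\bigr)f(\phi(x))\ge 0^\alpha$, and to conclude $f\ge 0^\alpha$ you need $2\rho_{s}(\tfrac12)>1$ strictly; this holds for $h\equiv 1$ (the Breckner case) but not for a general $h:[0,1]\to(0,\infty)$, since $\rho_s(\tfrac12)=\bigl(1/(2h(\tfrac12))\bigr)^{s}$ can be arbitrarily small, so ``since $s\in(0,1)$'' does not justify it. The auxiliary issues (the reference point $w=0$ need not lie in the domain, and your $\lambda$ is forced to equal $(r_1/r_2)^{s}$ rather than being the free parameter of the statement) compound this, and neither of the two repair strategies you sketch at the end is actually carried out.

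For comparison, the paper's proof never introduces a reference point and is a scaling-and-iteration argument in the spirit of Hudzik--Maligranda for $s$-convexity in the first sense: it starts from the one-variable inequality $f\bigl((\lambda^{1/s}+(1-\lambda)^{1/s})\phi(x)\bigr)\le K_\alpha(\lambda,s)f(\phi(x))$, notes that $\ell(\lambda)=\lambda^{1/s}+(1-\lambda)^{1/s}$ sweeps all of $[2^{1-1/s},1]$, hence $f(t\phi(x))\le K_\alpha(\lambda,s)f(\phi(x))$ for every $t$ in that range, and then iterates (writing $t=t^{1/2}\cdot t^{1/2}$ and, by induction, covering $t\in[2^{n(1-1/s)},1]$) to get $f(t\phi(x))\le K_\alpha^{\,n}(\lambda,s)f(\phi(x))$ for all $t\in(0,1]$; taking $t=\phi(x)/\phi(y)$ yields the comparison between $f(r_1)$ and $f(r_2)$ (with the power $K_\alpha^{\,n}$, which is where the monotonicity conclusion under $K_\alpha(\lambda,s)\le 1^\alpha$ comes from). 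If you want to rescue your two-point route you would have to either establish $f\le 0^\alpha$ at an admissible reference point or eliminate the reference point altogether, as the paper does.
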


\begin{proof}
Let $x\in\mathbb{R}$ such that $\phi(x)>0$, then
\[
f(\lambda^{\frac{1}{s}}\phi(x) + (1-\lambda)^{\frac{1}{s}}\phi(x)) \leq K_{\alpha}(\lambda, s) f(\phi(x))
\]
Clearly, the function $\ell:[0,1]\rightarrow \mathbb{R}$ given by $\ell(\lambda)=\lambda^{\frac{1}{s}}+ (1-\lambda)^{\frac{1}{s}}$ is continuous on $[0,1]$, decreasing on $[0,\frac{1}{2}]$ and increasing on $[\frac{1}{2}, 1]$. For all $p_1\in [0,\frac{1}{2}]$, there exists $p_2\in[\frac{1}{2},1]$ such that $\ell(p_1)= \ell(p_2)=p_3$ where $2^{1-\frac{1}{s}}\leq p_3\leq 1$ and $\ell(\frac{1}{2}) \leq \ell(p)\leq \ell(1)$ for all $p\in[0,1]$. Consequently, we have
\begin{align}
f(t\phi(x)) \leq K_\alpha(\lambda, s) f(\phi(x)) \quad\quad\textnormal{for all} \quad t\in[2^{1-\frac{1}{s}}, 1] \quad\textnormal{and}\quad \phi(x)>0. \label{a2q3}
\end{align}
If $t\in [2^{2\left(1-\frac{1}{s}\right)}, 1]$ then $t^{\frac{1}{2}}$ lies in the interval $[2^{1-\frac{1}{s}}, 1]$ with the estimate
 \[
f(t\phi(x))= f\left(t^{\frac{1}{2}}(t^{\frac{1}{2}}\phi(x))\right) \leq K_\alpha(\lambda, s) f(t^{\frac{1}{2}}\phi(x)) \leq K^2_\alpha(\lambda, s)f(\phi(x)) \quad\quad\textnormal{for all}\quad \phi(x)>0
\]
where we have used estimate \eqref{a2q3}.
By induction, if $t\in [2^{n\left(1-\frac{1}{s}\right)}, 1]$ then $t^{\frac{1}{n}}\in [2^{1-\frac{1}{s}}, 1]$ together with the  estimate
\begin{align}
f(t\phi(x))\leq K_\alpha^n(\lambda, s) f(\phi(x)) \quad\quad\textnormal{for all}\quad \phi(x)>0\quad\textnormal {and}\quad 0< t\leq 1. \label{ap23}
\end{align}
Choose $y\in\mathbb{R}$ such that $0<\phi(x)\leq \phi(y)$  and apply the estimate \eqref{ap23}  to have
\[
f(\phi(x))=f\left(\frac{\phi(x)}{\phi(y)} \phi(y) \right) \leq K^n_\alpha(\lambda, s) f(\phi(y)).
\]
By definition, $K_\alpha(\lambda, s)>0$. Clearly, $f$ is non decreasing on $(0,\infty)$ if $K_\alpha(\lambda, s)\leq1^\alpha$.
\end{proof}

\begin{exa}\label{e1}
Let $0<s<1$ and $\beta, \gamma, \sigma \in \mathbb{R}$. Suppose that $\rho_{\alpha s}(t)+\rho_{\alpha s}(1-t)=1^\alpha$ with $t^{\alpha s} \leq \rho_{\alpha s}(t)$ for all $t\in[0,1]$. Define $f:I\subseteq [0,\infty)\rightarrow \mathbb{R}^\alpha$ by
\[
f(A)= \left\{
\begin{array}{ll}
      \beta^\alpha & \textnormal{if}\quad A=0 \\
      (\gamma A^s +\sigma)^\alpha & \textnormal{if} \quad A>0 \\
\end{array} 
\right.
\]
Then the following holds.
\begin{enumerate}[(i)]
\item If $\gamma \geq 0$ and $\sigma \leq \beta$ then $f$ is generalised $\phi_{h-s}$ convex.
\item If $\gamma \geq 0$ and $\sigma < \beta$, then $f$ is non decreasing on $(0,\infty)$.
\end{enumerate}
\end{exa}
There are only two non trivial cases of Example \ref{e1}.

Case 1: Let $A>0$ and $B>0$, then $tA+(1-t)B >0$ and
\begin{align}
f(tA+(1-t)B ) &= \gamma^\alpha(tA +(1-t) B)^{\alpha s} +\sigma^\alpha   \nonumber
\\&\leq \gamma^\alpha(t^s A^s +(1-t)^s B^s)^{\alpha } +\sigma^\alpha   \nonumber
\\&\leq  \gamma^\alpha( \rho_{\alpha s}(t) A^{\alpha s} +\rho_{\alpha s}(1-t)  B^{\alpha s}) + \sigma^\alpha(\rho_{\alpha s}(t)+\rho_{\alpha s}(1-t))  \nonumber
\\&= \rho_{\alpha s}(t)f(A) + \rho_{\alpha s}(1-t) f(B).                  \nonumber
\end{align}



Case 2: Without loss of generality, let $B>A=0$. Since $t\in[0,1]$, we have
\begin{align} 
f(t0 +(1-t) B)&= f((1-t)B) = \gamma^\alpha ((1-t)B)^{\alpha s} +\sigma^\alpha   \nonumber
\\&\leq \gamma^\alpha \rho_{\alpha s}(1-t)B^{\alpha s} +\sigma^\alpha (\rho_{\alpha s}(t)+\rho_{\alpha s}(1-t))  \nonumber
 \nonumber
\\&\leq \beta^\alpha \rho_{\alpha s}(t) + \rho_{\alpha s}(1-t)(\sigma^\alpha +\gamma^\alpha B^{\alpha s} )
 \nonumber
\\& = \rho_{\alpha s}(t)  f(0) + \rho_{\alpha s}(1-t) f(B). \nonumber 
\end{align}
This proves the first statement. The proof of the second statement is obvious.

\section{Hermite-Hadamard-type Inequalities on Fractal Sets}
We now establish some inequalities for generalised $\phi_{h-s}$ convex functions on fractal sets. 
\begin{theo}\label{p1} \text{(Generalised Hermite-Hadamard inequality).}
Let $f:I\subseteq\mathbb{R}\rightarrow\mathbb{R}^\alpha$. Suppose that $f\in C_\alpha(I)$ is a generalised $\phi_{h-s}$ convex function. Then for all $t\in [0, 1]$ and $\phi(a), \phi(b)\in I\subseteq \mathbb{R}$, the  inequality holds
\begin{align}
\frac{1}{2^\alpha\Gamma(1+\alpha)\rho_{\alpha s}\left(\frac{1}{2}\right)}f\left( \frac{\phi(a)+\phi(b)}{2}  \right) \leq \frac{ {}_{\phi(a)}{\mathcal{J}}_{\phi(b)}^{\alpha} f(\phi(x))}{(\phi(b)-\phi(a))^\alpha} \leq \left(f(\phi(a)) +f(\phi(b))\right) {}_0\mathcal{J}_1^\alpha\rho_{\alpha s}(t)    .  \label{result1}
\end{align}
\end{theo}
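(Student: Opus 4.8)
The plan is to establish the two inequalities of \eqref{result1} separately, each by feeding a pointwise instance of the generalised $\phi_{h-s}$ convexity inequality \eqref{deff} into the local fractional integral operator ${}_0\mathcal{J}_1^\alpha=\frac{1}{\Gamma(1+\alpha)}\int_0^1(\,\cdot\,)(dt)^\alpha$ acting in the parameter $t$, and then transporting the resulting $t$-integrals back to the middle term of \eqref{result1} via the change-of-variables rule for local fractional integrals along the affine substitution $t\longmapsto t\phi(a)+(1-t)\phi(b)$. The hypothesis $f\in C_\alpha(I)$ is used only to guarantee that all the local fractional integrals in sight exist. The standard toolkit I will lean on is: linearity of ${}_0\mathcal{J}_1^\alpha$ over $\mathbb{R}^\alpha$; its monotonicity (it preserves $\leq$); the evaluation ${}_0\mathcal{J}_1^\alpha c=c/\Gamma(1+\alpha)$ for $c\in\mathbb{R}^\alpha$ constant in $t$; invariance under the reflection $t\mapsto 1-t$ on $[0,1]$; and the fractal-line arithmetic identity $A+A=2^\alpha A$, which is exactly what produces the factor $2^\alpha$ in the statement.

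For the right-hand inequality I would apply \eqref{deff} with $x=a$ and $y=b$ (admissible since $a,b\in[a,b]$), giving
\[
f(t\phi(a)+(1-t)\phi(b))\leq \rho_{\alpha s}(t)\,f(\phi(a))+\rho_{\alpha s}(1-t)\,f(\phi(b)),\qquad t\in[0,1].
\]
Applying ${}_0\mathcal{J}_1^\alpha$ to both sides, then using monotonicity, pulling the constants $f(\phi(a)),f(\phi(b))\in\mathbb{R}^\alpha$ outside by linearity, using ${}_0\mathcal{J}_1^\alpha\rho_{\alpha s}(1-t)={}_0\mathcal{J}_1^\alpha\rho_{\alpha s}(t)$ (reflection), and using the change-of-variables identity
\[
{}_0\mathcal{J}_1^\alpha f(t\phi(a)+(1-t)\phi(b))=\frac{1}{(\phi(b)-\phi(a))^\alpha}\,{}_{\phi(a)}\mathcal{J}_{\phi(b)}^\alpha f(\phi(x)),
\]
one lands exactly on
\[
\frac{{}_{\phi(a)}\mathcal{J}_{\phi(b)}^\alpha f(\phi(x))}{(\phi(b)-\phi(a))^\alpha}\leq \bigl(f(\phi(a))+f(\phi(b))\bigr)\,{}_0\mathcal{J}_1^\alpha\rho_{\alpha s}(t),
\]
which is the right-hand inequality of \eqref{result1}.

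For the left-hand inequality I would fix $t\in[0,1]$ and put $u=t\phi(a)+(1-t)\phi(b)$ and $v=(1-t)\phi(a)+t\phi(b)$, so that $\tfrac12 u+\tfrac12 v=\tfrac{\phi(a)+\phi(b)}{2}$. Applying \eqref{deff} with parameter $\tfrac12$ to the pair $u,v$ gives
\[
f\!\left(\frac{\phi(a)+\phi(b)}{2}\right)\leq \rho_{\alpha s}\!\left(\tfrac12\right)\bigl(f(u)+f(v)\bigr).
\]
Dividing by the strictly positive element $\rho_{\alpha s}(\tfrac12)\in\mathbb{R}^\alpha$, applying ${}_0\mathcal{J}_1^\alpha$ in $t$ (the left side is constant in $t$, so it becomes $\frac{1}{\Gamma(1+\alpha)\rho_{\alpha s}(1/2)}f(\cdot)$), and using the change-of-variables identity above on ${}_0\mathcal{J}_1^\alpha f(u)$ and, after the reflection $t\mapsto 1-t$, on ${}_0\mathcal{J}_1^\alpha f(v)$ — both of which equal $\frac{1}{(\phi(b)-\phi(a))^\alpha}\,{}_{\phi(a)}\mathcal{J}_{\phi(b)}^\alpha f(\phi(x))$ — together with $A+A=2^\alpha A$, a rearrangement yields the left-hand inequality of \eqref{result1}.

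The step I expect to be the main obstacle is the change-of-variables rule for the local fractional integral: the substitution $t\mapsto t\phi(a)+(1-t)\phi(b)$ reverses orientation (it maps $0\mapsto\phi(b)$ and $1\mapsto\phi(a)$), so one has to handle the convention ${}_p\mathcal{J}_q^\alpha=-\,{}_q\mathcal{J}_p^\alpha$ and the behaviour of $(\cdot)^\alpha$ on the fractal line carefully in order to recover precisely the factor $(\phi(b)-\phi(a))^\alpha$ in the denominator. A secondary point requiring care is implicit in the way \eqref{deff} is invoked above: the arguments $t\phi(a)+(1-t)\phi(b)$, $(1-t)\phi(a)+t\phi(b)$ and $\tfrac{\phi(a)+\phi(b)}{2}$ must themselves be expressible as $t\phi(x)+(1-t)\phi(y)$ with $x,y\in[a,b]$; this is automatic when $\phi$ is affine (in particular for $\phi=\mathrm{id}$, where the theorem reduces to the known fractal Hermite–Hadamard inequality) and otherwise needs the segment between $\phi(a)$ and $\phi(b)$ to lie in $\phi([a,b])$. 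Everything else reduces to the routine properties of ${}_0\mathcal{J}_1^\alpha$ listed above.
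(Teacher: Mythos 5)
Your proposal is correct and follows essentially the same route as the paper: the right-hand inequality comes from integrating the pointwise convexity bound for $f(t\phi(a)+(1-t)\phi(b))$ and using ${}_0\mathcal{J}_1^\alpha\rho_{\alpha s}(t)={}_0\mathcal{J}_1^\alpha\rho_{\alpha s}(1-t)$, while the left-hand one comes from applying \eqref{deff} with parameter $\tfrac12$ to the symmetric pair $t\phi(a)+(1-t)\phi(b)$ and $(1-t)\phi(a)+t\phi(b)$ and then integrating in $t$, exactly as in the paper's proof (which likewise implicitly assumes the midpoint and the convex combinations are attained as $\phi(x)$ for some $x$). The two points you flag as delicate (the change of variables and the admissibility of the arguments in \eqref{deff}) are handled in the paper by the same implicit conventions, so there is no substantive difference.
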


\begin{proof}
Using the assumption of generalised $\phi_{h-s}$ convexity on $f$, one observes that for all $t\in [0, 1]$ and $\phi(a), \phi(b)\in I\subseteq \mathbb{R}$
\begin{align}
f\left(\frac{\phi(a)+\phi(b)}{2}\right)&= f\left(\frac{1}{2}\left( t\phi(a)+ (1-t)\phi(b) + (1-t)\phi(a) +t \phi(b)   \right)\right)   \nonumber
\\&\leq    \rho_{\alpha s}(1/2) f(t\phi(a)+(1-t)\phi(b)) + \rho_{\alpha s}(1/2) f(t\phi(b)+(1-t)\phi(a))  .           \label{fracq}
\end{align}
Choosing $x\in I$ such that $\phi(x)=t\phi(a)+(1-t)\phi(b)$ and noting that $t\phi(b)+(1-t)\phi(a)= \phi(a)+\phi(b)-\phi(x)$,
 a local fractional integration of \eqref{fracq} with respect to $t$ on $[0,1]$ yields the first inequality in \eqref{result1}.
To prove the second inequality, we use the generalised $\phi_{h-s}$ convexity of $f$ to estimate the functional $\mathcal{J}^\alpha f(\phi(x))$ on $(\phi(a), \phi(b))\subseteq I$. Noting that ${}_{0}\mathcal{J}^\alpha_{1} (\rho_{\alpha s}(t)- \rho_{\alpha s}(1-t))=0$, we have
\begin{align}
{}_{\phi(a)}\mathcal{J}^\alpha_{\phi(b)} f(\phi(x)) &= (\phi(b)-\phi(a))^\alpha\,\, {}_{0}\mathcal{J}^\alpha_{1} f(t\phi(a)+(1-t)\phi(b))  \nonumber
\\& \leq (\phi(b)-\phi(a))^\alpha\,\, {}_{0}\mathcal{J}^\alpha_{1}\left[ \rho_{\alpha s}(t) f(\phi(a)) +\rho_{\alpha s}(1-t) f(\phi(b)) \right] \nonumber
\\&= (\phi(b)-\phi(a))^\alpha [f(\phi(a))+f(\phi(b))] \,\,{}_{0}\mathcal{J}^\alpha_{1}\rho_{\alpha s}(t) . \nonumber
\end{align}
This completes the proof.
\end{proof}

Per Remark \ref{r1}, several estimates can be deduced from Theorem \ref{p1}. For instance, by choosing the functions $\phi(x)=x$ for all $x\in I$ and $h(t)=1$ for all $t\in[0,1]$,  the following estimate involving Breckner $s$-convex functions in the second sense holds.
\begin{cor}\cite{mo}
Let $f:I\rightarrow\mathbb{R}^\alpha$.  Suppose $f\in C_\alpha(I)$ is a generalised $s$-convex function in the second sense for $s\in(0,1)$. Then, for all $t\in [0, 1]$ and $a, b\in I\subseteq \mathbb{R}$ with $a\neq b$, the following inequality holds
\[
2^{(s-1)\alpha} f\left( \frac{a+b}{2} \right) \leq \frac{\Gamma(1+\alpha)}{(b-a)^\alpha} {}_a \mathcal{J}_b^\alpha f(x).
\] 
\end{cor}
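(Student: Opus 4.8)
The plan is to obtain this corollary as a direct specialisation of Theorem \ref{p1}. First I would set $\phi(x)=x$ for all $x\in I$ and $h(t)=1$ for all $t\in[0,1]$. With this choice $\rho_s(t)=\left(\tfrac{t}{h(t)}\right)^{s}=t^{s}$, hence $\rho_{\alpha s}(t)=t^{\alpha s}$ and in particular $\rho_{\alpha s}\!\left(\tfrac12\right)=2^{-\alpha s}$. By Remark \ref{r1}(iv), a generalised $s$-convex function in the second sense with $s\in(0,1)$ is precisely a generalised $\phi_{h-s}$ convex function for this $\phi$ and this $h$, so the hypotheses of Theorem \ref{p1} are met; moreover $a\neq b$ guarantees that $(b-a)^\alpha$ is invertible in $\mathbb{R}^\alpha$, so the middle term of \eqref{result1} is well defined.

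Next I would extract only the left-hand inequality of \eqref{result1}, which under the above substitutions reads
\[
\frac{1}{2^\alpha\,\Gamma(1+\alpha)\,2^{-\alpha s}}\,f\!\left(\frac{a+b}{2}\right)\ \leq\ \frac{{}_{a}\mathcal{J}_{b}^{\alpha} f(x)}{(b-a)^\alpha}.
\]
Rearranging the scalar prefactor via $2^\alpha\cdot 2^{-\alpha s}=2^{(1-s)\alpha}$, so that its reciprocal is $2^{(s-1)\alpha}$, and then multiplying both sides by the positive quantities $\Gamma(1+\alpha)$ and $(b-a)^\alpha$, I arrive at
\[
2^{(s-1)\alpha}\,f\!\left(\frac{a+b}{2}\right)\ \leq\ \frac{\Gamma(1+\alpha)}{(b-a)^\alpha}\,{}_{a}\mathcal{J}_{b}^{\alpha} f(x),
\]
which is the asserted inequality.

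There is essentially no obstacle here beyond bookkeeping: one only needs to track the exponent arithmetic in $\mathbb{R}^\alpha$ (in particular that $(2^\alpha 2^{-\alpha s})^{-1}=2^{(s-1)\alpha}$) and to observe that the second inequality in \eqref{result1} is simply not invoked, since the corollary records only the midpoint bound. Optionally, one could note that feeding the same substitutions into the right-hand inequality of \eqref{result1}, together with the local fractional integral ${}_0\mathcal{J}_1^\alpha t^{\alpha s}=\tfrac{\Gamma(1+\alpha s)}{\Gamma(1+\alpha+\alpha s)}$, would produce the companion upper estimate $\frac{\Gamma(1+\alpha)}{(b-a)^\alpha}\,{}_a\mathcal{J}_b^\alpha f(x)\le \frac{\Gamma(1+\alpha s)}{\Gamma(1+\alpha+\alpha s)}\big(f(a)+f(b)\big)$, but this refinement is not needed for the stated result.
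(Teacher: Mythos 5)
Your proposal is correct and follows exactly the paper's route: the corollary is obtained there, with no separate proof, precisely by substituting $\phi(x)=x$ and $h(t)\equiv 1$ (so $\rho_{\alpha s}(t)=t^{\alpha s}$ and $\rho_{\alpha s}(\tfrac12)=2^{-\alpha s}$) into the left-hand inequality of \eqref{result1} and simplifying $(2^\alpha 2^{-\alpha s})^{-1}=2^{(s-1)\alpha}$. Only a trivial remark: multiplying through by $(b-a)^\alpha$ is unnecessary (and not actually performed in your displayed conclusion), since the stated inequality keeps that factor in the denominator; everything else, including the observation that the right-hand bound of \eqref{result1} is not needed, matches the paper.
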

\begin{cor}
Let $f:I\rightarrow\mathbb{R}^\alpha$.  Suppose that $f\in C_\alpha(I)$ is a generalised  convex function. Then, for all $t\in [0, 1]$ and $a, b\in I\subseteq \mathbb{R}$ with $a\neq b$, the following inequality holds
\[
\frac{1}{\Gamma(1+\alpha)} f\left( \frac{a+b}{2} \right) \leq \frac{   {}_a \mathcal{J}_b^\alpha f(x)}{(b-a)^\alpha}\leq \frac{\Gamma(1+\alpha)}{\Gamma(1+2\alpha)} (f(a) +f(b)) .
\] 
\end{cor}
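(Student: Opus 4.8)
The plan is to obtain this corollary as a direct specialisation of the Generalised Hermite--Hadamard inequality, Theorem \ref{p1}. Following Remark \ref{r1}(ii), I would choose $\phi(x)=x$ for all $x\in I$, $h(t)=1$ for all $t\in[0,1]$, and $s=1$, so that ``generalised $\phi_{h-s}$ convex'' coincides exactly with ``generalised convex''; since $f\in C_\alpha(I)$ is assumed, the hypotheses of Theorem \ref{p1} are met. Under this choice one has $\rho_{\alpha s}(t)=\left(\tfrac{t}{h(t)}\right)^{\alpha s}=t^{\alpha}$ for every $t\in[0,1]$, and in particular $\rho_{\alpha s}\!\left(\tfrac12\right)=2^{-\alpha}$.

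First I would read off the left-hand inequality. Substituting $\rho_{\alpha s}\!\left(\tfrac12\right)=2^{-\alpha}$ into the first inequality of \eqref{result1}, the prefactor $\dfrac{1}{2^{\alpha}\Gamma(1+\alpha)\rho_{\alpha s}(1/2)}$ collapses to $\dfrac{1}{\Gamma(1+\alpha)}$, which is precisely the claimed lower bound for $\dfrac{{}_a\mathcal{J}_b^{\alpha}f(x)}{(b-a)^{\alpha}}$.

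Next I would treat the right-hand inequality, where the only computation needed is the evaluation of the local fractional integral ${}_0\mathcal{J}_1^{\alpha}\rho_{\alpha s}(t)={}_0\mathcal{J}_1^{\alpha}t^{\alpha}$. Here I would invoke the standard monomial formula for the local fractional integral, ${}_a\mathcal{J}_b^{\alpha}x^{k\alpha}=\dfrac{\Gamma(1+k\alpha)}{\Gamma(1+(k+1)\alpha)}\bigl(b^{(k+1)\alpha}-a^{(k+1)\alpha}\bigr)$, applied with $k=1$, $a=0$, $b=1$, which gives ${}_0\mathcal{J}_1^{\alpha}t^{\alpha}=\dfrac{\Gamma(1+\alpha)}{\Gamma(1+2\alpha)}$. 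Inserting this into the second inequality of \eqref{result1} produces the upper bound $\dfrac{\Gamma(1+\alpha)}{\Gamma(1+2\alpha)}(f(a)+f(b))$, and combining the two halves finishes the proof.

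Since the whole argument is a substitution into Theorem \ref{p1} together with one elementary integral evaluation, I do not anticipate any genuine obstacle; the only points requiring a little care are correctly using the $\Gamma$-normalisation in the local fractional integral of $t^{\alpha}$ and checking that the specialisation $s=1$, $h\equiv 1$, $\phi=\mathrm{id}$ leaves the continuity and convexity hypotheses of Theorem \ref{p1} intact.
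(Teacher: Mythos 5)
Your proposal is correct and is exactly the route the paper intends: the corollary is stated as the specialisation of Theorem \ref{p1} with $\phi(x)=x$, $h\equiv 1$, $s=1$, so that $\rho_{\alpha s}(t)=t^{\alpha}$, the prefactor $\frac{1}{2^{\alpha}\Gamma(1+\alpha)\rho_{\alpha s}(1/2)}$ reduces to $\frac{1}{\Gamma(1+\alpha)}$, and ${}_0\mathcal{J}_1^{\alpha}t^{\alpha}=\frac{\Gamma(1+\alpha)}{\Gamma(1+2\alpha)}$ gives the upper bound. No gaps; your evaluation of the local fractional integral matches the identity the paper itself uses later in the probability application.
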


\begin{cor}
Let $f:I\rightarrow\mathbb{R}^\alpha$. Suppose that $f\in C_\alpha(I)$ is a generalised $P$-function. Then, for all $t\in [0, 1]$ and $a, b\in I\subseteq \mathbb{R}$ with $a\neq b$, the following inequality holds
\[
\frac{2^{-\alpha}}{\Gamma(1+\alpha)} f\left( \frac{a+b}{2} \right) \leq \frac{   {}_a \mathcal{J}_b^\alpha f(x)}{(b-a)^\alpha}\leq \frac{1}{\Gamma(1+\alpha)} (f(a) +f(b)) .
\] 
\end{cor}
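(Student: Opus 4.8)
The plan is to obtain this corollary as a direct specialisation of the Generalised Hermite--Hadamard inequality in Theorem \ref{p1}, exactly in the spirit of the two preceding corollaries. Recall from Remark \ref{r1}(i) that a generalised $P$-function is precisely the case $s=0$ together with $\phi(x)=x$ for all $x\in I$. So the first step is to take $\phi=\mathrm{id}_I$ and $s=0$ in \eqref{result1}; this is legitimate since $f\in C_\alpha(I)$ and all hypotheses of Theorem \ref{p1} are in force.

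Next I would evaluate the three quantities appearing in \eqref{result1} at $s=0$. Since $\rho_{\alpha s}(t)=\left(\frac{t}{h(t)}\right)^{\alpha s}$ with $h\not\equiv 0$, putting $s=0$ gives $\rho_{\alpha\cdot 0}(t)=1^\alpha$ for every $t\in[0,1]$. In particular $\rho_{\alpha s}(1/2)=1^\alpha$, so the constant multiplying $f\!\left(\frac{a+b}{2}\right)$ on the left of \eqref{result1} collapses to $\frac{1}{2^\alpha\Gamma(1+\alpha)\,1^\alpha}=\frac{2^{-\alpha}}{\Gamma(1+\alpha)}$. For the upper bound I would use the normalisation of the local fractional integral on a constant, ${}_0\mathcal{J}_1^\alpha 1^\alpha=\frac{1^\alpha}{\Gamma(1+\alpha)}$, whence $\left(f(a)+f(b)\right){}_0\mathcal{J}_1^\alpha\rho_{\alpha s}(t)=\frac{1}{\Gamma(1+\alpha)}\left(f(a)+f(b)\right)$. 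The middle term becomes $\frac{{}_a\mathcal{J}_b^\alpha f(x)}{(b-a)^\alpha}$ verbatim once $\phi=\mathrm{id}$, which is why the restriction $a\neq b$ is imposed — to keep the denominator from vanishing. Assembling these three evaluations reproduces the claimed chain of inequalities and finishes the proof.

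There is essentially no obstacle here; the only points that merit a line of justification are the convention $\rho_{\alpha\cdot 0}\equiv 1^\alpha$ and the value of ${}_0\mathcal{J}_1^\alpha$ on the constant $1^\alpha$, both immediate from the definitions recalled in the preliminaries. One could instead give a self-contained derivation by rerunning the proof of Theorem \ref{p1} with $\rho_{\alpha s}\equiv 1^\alpha$ throughout — that is, starting from $f\!\left(\frac{a+b}{2}\right)\le f(ta+(1-t)b)+f(tb+(1-t)a)$, choosing $x$ with $x=ta+(1-t)b$, and integrating in $t$ over $[0,1]$ for the lower estimate, and bounding ${}_a\mathcal{J}_b^\alpha f(x)$ by the $P$-function inequality for the upper one — but specialising Theorem \ref{p1} is the cleaner route and is the one I would present.
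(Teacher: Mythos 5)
Your proposal is correct and matches the paper's (implicit) argument: the corollary is obtained exactly as you describe, by specialising Theorem \ref{p1} with $\phi(x)=x$ and $s=0$, so that $\rho_{\alpha s}\equiv 1^\alpha$, the left constant becomes $\frac{2^{-\alpha}}{\Gamma(1+\alpha)}$, and ${}_0\mathcal{J}_1^\alpha 1^\alpha=\frac{1}{\Gamma(1+\alpha)}$ gives the right-hand bound. Nothing further is needed.
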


\subsection{Extensions of the Hermite-Hadamard Inequalities on Fractal Sets}
We now establish an important representation lemma for  functions belonging to the space $ C_\alpha(I)$.
\begin{lem} \label{ojoa}

Let $\phi:[a,b]\rightarrow (0,\infty)$ and $f:I\subseteq\mathbb{R}\rightarrow\mathbb{R}^\alpha$ be such that $f\in C_\alpha(I)$.
Then for $0\leq \lambda \leq 1$, we have
\begin{align}
{}_{0}\mathcal{J}_1 ^{\alpha} f\left((1-t)\phi(a) +t\phi(b)  \right) =(1-\lambda)^\alpha {}_{0}\mathcal{J}_{1} ^\alpha f\left((1-t)((1-\lambda)\phi(a) +\lambda\phi(b))+t\phi(b)\right)  \nonumber
\\\quad +\lambda^\alpha {}_{0}\mathcal{J}_1^\alpha  f\left((1-t)\phi(a) +t((1-\lambda)\phi(a) +\lambda\phi(b)) \right)   \label{ineq1}
\end{align}
\end{lem}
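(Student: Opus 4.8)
The plan is to reduce \eqref{ineq1} to two elementary features of the Yang local fractional integral: its behaviour under an affine change of variable, and its additivity over an interior subdivision point. Set $c:=(1-\lambda)\phi(a)+\lambda\phi(b)$, the convex combination that appears on the right-hand side, and assume (as elsewhere in this section, and as is needed for the statement to hold nontrivially) that $\phi(a)\neq\phi(b)$; relabelling if necessary we may take $\phi(a)<\phi(b)$. Since $0\leq\lambda\leq1$, the point $c$ satisfies $\phi(a)\leq c\leq\phi(b)$, which is exactly what makes the subdivision below legitimate.

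First I would record the affine substitution rule for $C_\alpha$ functions: for $p<q$ in $I$ and $g\in C_\alpha(I)$,
\[
{}_{p}\mathcal{J}_{q}^{\alpha} g(u)\;=\;(q-p)^{\alpha}\;\,{}_{0}\mathcal{J}_{1}^{\alpha} g\bigl((1-t)p+tq\bigr),
\]
which follows from the substitution $u=(1-t)p+tq$, $(du)^{\alpha}=(q-p)^{\alpha}(dt)^{\alpha}$, in the definition ${}_{p}\mathcal{J}_{q}^{\alpha}g(u)=\frac{1}{\Gamma(1+\alpha)}\int_{p}^{q}g(u)(du)^{\alpha}$; this is precisely the linear change of variable already used in the proof of Theorem \ref{p1}. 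Applying it with $(p,q)=(\phi(a),\phi(b))$ rewrites the left-hand side of \eqref{ineq1} as $(\phi(b)-\phi(a))^{-\alpha}\,{}_{\phi(a)}\mathcal{J}_{\phi(b)}^{\alpha} f(u)$.

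Next I would split this integral at $c$ using additivity of the local fractional integral, ${}_{\phi(a)}\mathcal{J}_{\phi(b)}^{\alpha} f={}_{\phi(a)}\mathcal{J}_{c}^{\alpha} f+{}_{c}\mathcal{J}_{\phi(b)}^{\alpha} f$, and apply the substitution rule to each piece: with $(p,q)=(\phi(a),c)$ on the first and $(p,q)=(c,\phi(b))$ on the second. The only computation to carry out is that $c-\phi(a)=\lambda(\phi(b)-\phi(a))$ and $\phi(b)-c=(1-\lambda)(\phi(b)-\phi(a))$, so that the Jacobian factors become $\lambda^{\alpha}(\phi(b)-\phi(a))^{\alpha}$ and $(1-\lambda)^{\alpha}(\phi(b)-\phi(a))^{\alpha}$; one also checks that $(1-t)\phi(a)+tc$ and $(1-t)c+t\phi(b)$ are exactly the arguments of $f$ occurring on the right of \eqref{ineq1}. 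Dividing through by $(\phi(b)-\phi(a))^{\alpha}$ then gives \eqref{ineq1}.

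I do not expect a real obstacle: the content is bookkeeping, and the two structural facts used (affine substitution, additivity) are standard for the Yang integral and are already in use in the preceding proofs — the hypothesis $f\in C_\alpha(I)$ is exactly what guarantees that $f$ restricted to each sub-segment is again local fractional continuous, so that all the integrals exist. The one point worth stating carefully is the exclusion of the degenerate case $\phi(a)=\phi(b)$, where \eqref{ineq1} would force $\lambda^{\alpha}+(1-\lambda)^{\alpha}=1^\alpha$ and hence fails in general. A slightly shorter route, avoiding the detour through ${}_{\phi(a)}\mathcal{J}_{\phi(b)}^{\alpha}$, is to split the defining integral $\frac{1}{\Gamma(1+\alpha)}\int_{0}^{1} f((1-t)\phi(a)+t\phi(b))(dt)^{\alpha}$ directly at $t=\lambda$ and rescale each of $[0,\lambda]$ and $[\lambda,1]$ back to $[0,1]$; this produces the same two terms and may be the cleanest presentation.
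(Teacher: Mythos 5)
Your proposal is correct and is essentially the paper's argument: the paper splits the parameter integral at $t=\lambda$ via the substitutions $u=\lambda(1-t)+t$ and $u=\lambda t$ (exactly the ``shorter route'' you mention at the end), while your main presentation performs the same split at the point $(1-\lambda)\phi(a)+\lambda\phi(b)$ after transporting the integral to $[\phi(a),\phi(b)]$; both rest on the same affine change of variable and interval additivity of the local fractional integral. Your caveat about the degenerate case $\phi(a)=\phi(b)$ is a reasonable observation, but it does not change the substance of the proof.
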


\begin{proof}
It is easy to see that equality \eqref{ineq1} holds for $\lambda=0$ and $\lambda =1$. Now, suppose $\lambda\in(0,1)$. The change of variable 
 $u= \lambda(1-t) +t$ yields 
\begin{align}
{}_{0}\mathcal{J}_{1} ^\alpha\,\, f\left((1-t)((1-\lambda)\phi(a) +\lambda\phi(b))+t\phi(b)\right) &=\frac{1}{\Gamma(\alpha+1)}\frac{1}{(1-\lambda)^\alpha} \int_\lambda^1 f\left( (1-u) \phi(a) +u\phi(b) \right)\,(du)^\alpha  \nonumber
\\&= \frac{1}{(1-\lambda)^\alpha}\,\, {}_0 \mathcal{J}_\lambda^\alpha\,\, f\left( (1-u) \phi(a) +u\phi(b) \right). \label{one1}
\end{align}
On the other hand, the change of variable $u=\lambda t$ yields
\begin{align}
{}_{0}\mathcal{J}_1^\alpha \,\, f\left((1-t)\phi(a) +t((1-\lambda)\phi(a) +\lambda\phi(b)) \right) &= \frac{1}{\Gamma(1+\alpha)}\frac{1}{\lambda^\alpha} \int_0^\lambda f((1-u)\phi(a) + u\phi(b) )\,\, (du)^\alpha \nonumber
\\&= \frac{1}{\lambda^\alpha}\,\, {}_0\mathcal{J}_\lambda^\alpha\,\, f((1-u)\phi(a) +u\phi(b)). \label{two2}
\end{align}
Combining \eqref{one1} and \eqref{two2} gives the identity \eqref{ineq1}. This completes the proof.

\end{proof}

\begin{theo}
Let $f:I\subseteq\mathbb{R}\rightarrow\mathbb{R}^\alpha$. Suppose that $f\in C_\alpha(I)$ is a generalised $\phi_{h-s}$ convex function. If $\rho_{\alpha s}(t)\leq t^\alpha $  for all $t\in [0, 1]$ then for all $\phi(a), \phi(b)\in I\subseteq \mathbb{R}$ with $\phi(a)\leq \phi(b)$ and any $\lambda \in[0,1]$, the following inequality holds
\begin{align}
\frac{f\left( \frac{\phi(a)+\phi(b)}{2} \right)}{\Gamma(\alpha+1)}  
&\leq \frac{\rho_{\alpha s}(1-\lambda)}{\Gamma(\alpha+1)} f\left( \frac{(1-\lambda)\phi(a) +(\lambda +1)\phi(b)}{2} \right) + \frac{\rho_{\alpha s}(\lambda )}{\Gamma(\alpha +1)}f\left( \frac{(2-\lambda)\phi(a)+\lambda \phi(b)}{2} \right) \nonumber
\\&\leq \frac{ {}_{\phi(a)}\mathcal{J}_{\phi(b)}^\alpha f(x) }{(\phi(b)-\phi(a))^\alpha}  \nonumber
\\&\leq \left[  f((1-\lambda)\phi(a)+\lambda^\alpha f(\phi(a)) +\lambda \phi(b)) +(1-\lambda)^\alpha f(\phi(b))   \right]\,\,{}_0\mathcal{J}_{1}^\alpha\, \rho_{\alpha s}(x)   \nonumber
\\& \leq [f(\phi(a)) +f(\phi(b))]\,\, {}_0\mathcal{J}_{1}^\alpha\, \rho_{\alpha s}(x)  . \nonumber
\end{align}
\end{theo}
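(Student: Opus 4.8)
The statement is a chain of four inequalities, so the plan is to prove each link in turn, cutting the interval $[\phi(a),\phi(b)]$ at the point $\phi(c_\lambda):=(1-\lambda)\phi(a)+\lambda\phi(b)$ (which lies in $[\phi(a),\phi(b)]$ since $\phi(a)\le\phi(b)$ and $\lambda\in[0,1]$) and reading the middle quantity through Lemma~\ref{ojoa}. Write $m_1:=\tfrac{\phi(c_\lambda)+\phi(b)}{2}=\tfrac{(1-\lambda)\phi(a)+(\lambda+1)\phi(b)}{2}$ and $m_2:=\tfrac{\phi(a)+\phi(c_\lambda)}{2}=\tfrac{(2-\lambda)\phi(a)+\lambda\phi(b)}{2}$ for the midpoints of the two sub-intervals $[\phi(c_\lambda),\phi(b)]$ and $[\phi(a),\phi(c_\lambda)]$. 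Two elementary facts will be used throughout: the algebraic identity $(1-\lambda)m_1+\lambda m_2=\tfrac{\phi(a)+\phi(b)}{2}$, and the symmetry ${}_0\mathcal{J}_1^\alpha\rho_{\alpha s}(1-t)={}_0\mathcal{J}_1^\alpha\rho_{\alpha s}(t)$, which follows from the change of variables $u\mapsto 1-u$ in the local fractional integral. As in the proof of Theorem~\ref{p1}, whenever the defining inequality \eqref{deff} or Theorem~\ref{p1} is invoked, I choose points whose $\phi$-images are the prescribed values.

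\emph{The two lower estimates.} The first inequality follows from a single application of \eqref{deff} with parameter $t=1-\lambda$ to the points $m_1,m_2$: the identity above rewrites the left-hand side as $f\big(\tfrac{\phi(a)+\phi(b)}{2}\big)$ and the right-hand side as $\rho_{\alpha s}(1-\lambda)f(m_1)+\rho_{\alpha s}(\lambda)f(m_2)$; dividing by $\Gamma(1+\alpha)$ gives the first displayed line. For the second inequality, rewrite $\tfrac{1}{(\phi(b)-\phi(a))^\alpha}\,{}_{\phi(a)}\mathcal{J}_{\phi(b)}^\alpha f$ as ${}_0\mathcal{J}_1^\alpha f\big((1-t)\phi(a)+t\phi(b)\big)$ and apply Lemma~\ref{ojoa}, so the middle quantity becomes $(1-\lambda)^\alpha\,{}_0\mathcal{J}_1^\alpha f\big((1-t)\phi(c_\lambda)+t\phi(b)\big)+\lambda^\alpha\,{}_0\mathcal{J}_1^\alpha f\big((1-t)\phi(a)+t\phi(c_\lambda)\big)$. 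On each of these two integrals I recognise, after the affine rescaling used in the proof of Theorem~\ref{p1}, the normalised integral of $f$ over $[\phi(c_\lambda),\phi(b)]$, respectively $[\phi(a),\phi(c_\lambda)]$, and apply the left inequality of \eqref{result1} to each, obtaining the lower bounds $\tfrac{1}{2^\alpha\Gamma(1+\alpha)\rho_{\alpha s}(1/2)}f(m_1)$ and $\tfrac{1}{2^\alpha\Gamma(1+\alpha)\rho_{\alpha s}(1/2)}f(m_2)$. Multiplying by the weights $(1-\lambda)^\alpha,\lambda^\alpha$ and then invoking the hypothesis $\rho_{\alpha s}(t)\le t^\alpha$ — which gives simultaneously $2^\alpha\rho_{\alpha s}(\tfrac12)\le 1$, $\rho_{\alpha s}(1-\lambda)\le(1-\lambda)^\alpha$ and $\rho_{\alpha s}(\lambda)\le\lambda^\alpha$, hence $2^\alpha\rho_{\alpha s}(\tfrac12)\rho_{\alpha s}(1-\lambda)\le(1-\lambda)^\alpha$ and likewise with $\lambda$ — converts these into the coefficients $\tfrac{\rho_{\alpha s}(1-\lambda)}{\Gamma(1+\alpha)}$ and $\tfrac{\rho_{\alpha s}(\lambda)}{\Gamma(1+\alpha)}$ of the second displayed line.

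\emph{The two upper estimates.} Split the integral again by Lemma~\ref{ojoa}. On the integrand $f\big((1-t)\phi(c_\lambda)+t\phi(b)\big)$ apply \eqref{deff} to bound it by $\rho_{\alpha s}(1-t)f(\phi(c_\lambda))+\rho_{\alpha s}(t)f(\phi(b))$, and similarly $f\big((1-t)\phi(a)+t\phi(c_\lambda)\big)\le\rho_{\alpha s}(1-t)f(\phi(a))+\rho_{\alpha s}(t)f(\phi(c_\lambda))$; integrating term by term and using the symmetry ${}_0\mathcal{J}_1^\alpha\rho_{\alpha s}(1-t)={}_0\mathcal{J}_1^\alpha\rho_{\alpha s}(t)$ collapses every integral to ${}_0\mathcal{J}_1^\alpha\rho_{\alpha s}(t)$, and collecting terms yields the third displayed line, in which $f(\phi(a))$, $f(\phi(b))$ and $f(\phi(c_\lambda))=f\big((1-\lambda)\phi(a)+\lambda\phi(b)\big)$ appear with coefficients $\lambda^\alpha$, $(1-\lambda)^\alpha$ and $\lambda^\alpha+(1-\lambda)^\alpha$. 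For the final inequality, bound $f\big((1-\lambda)\phi(a)+\lambda\phi(b)\big)$ once more through \eqref{deff} (with $t=1-\lambda$) and then through $\rho_{\alpha s}(t)\le t^\alpha$, substitute, and re-collect the coefficients of $f(\phi(a))$ and $f(\phi(b))$.

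\emph{Main obstacle.} The computational heart of the argument is the second inequality, where the intrinsic constant $\tfrac{1}{2^\alpha\Gamma(1+\alpha)\rho_{\alpha s}(1/2)}$ coming from Theorem~\ref{p1} must be matched against the cleaner target coefficients $\tfrac{\rho_{\alpha s}(1-\lambda)}{\Gamma(1+\alpha)}$, $\tfrac{\rho_{\alpha s}(\lambda)}{\Gamma(1+\alpha)}$ — this is precisely the role of the hypothesis $\rho_{\alpha s}(t)\le t^\alpha$, which is used both at $t=\tfrac12$ and at $t=\lambda,1-\lambda$. The only other point that demands care is the bookkeeping in the last step, where the re-collected coefficients must be shown to be dominated by those of $f(\phi(a))+f(\phi(b))$; and, as is standard for results of this kind, the passage from scalar comparisons such as $\rho_{\alpha s}(1-\lambda)\le(1-\lambda)^\alpha$ to inequalities between $\mathbb{R}^\alpha$-valued quantities rests on the nonnegativity of $f$ and on choosing, as in the proof of Theorem~\ref{p1}, representatives under $\phi$ for the affine combinations of $\phi(a),\phi(b)$ that appear.
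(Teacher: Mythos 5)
Your argument is essentially the paper's own proof: both split $[\phi(a),\phi(b)]$ at the point $(1-\lambda)\phi(a)+\lambda\phi(b)$, apply the generalised Hermite--Hadamard inequality of Theorem~\ref{p1} on each subinterval, recombine the $(1-\lambda)^\alpha,\lambda^\alpha$-weighted pieces through Lemma~\ref{ojoa}, use the hypothesis $\rho_{\alpha s}(t)\le t^\alpha$ at $t=\tfrac12,\lambda,1-\lambda$ to pass to the stated coefficients, obtain the first inequality from the convexity definition applied to the two sub-midpoints with parameter $1-\lambda$, and get the last one by bounding $f((1-\lambda)\phi(a)+\lambda\phi(b))$ via \eqref{deff} and the same hypothesis. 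The differences are purely presentational (the paper carries the constant $2^\alpha\rho_{\alpha s}(1/2)$ inside \eqref{qw1}--\eqref{qw2} before discarding it), so your proposal is correct and matches the paper's route.
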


\begin{proof}
Estimate \eqref{result1} and the generalised $\phi_{h-s}$ convexity of $f$ imply
\begin{align}
&\frac{1}{\Gamma(\alpha+1)}f\left( \frac{(1-\lambda)\phi(a) + (1+\lambda)\phi(b)}{2} \right) = \frac{1}{\Gamma(\alpha+1)}f\left( \frac{(1-\lambda)\phi(a) + \lambda\phi(b)}{2} +\frac{\phi(b)}{2} \right)  \nonumber
\\& \leq \frac{2^\alpha\rho_{\alpha s}(1/2)}{\Gamma(\alpha +1)}\int_0^1 f\Big((1-t)[(1-\lambda)\phi(a) +\lambda\phi(b)] + t \phi(b)  \Big) (dt)^\alpha  \nonumber
\\& \leq  \Big( f[(1-\lambda)\phi(a) +\lambda \phi(b)] + f(\phi(b))  \Big) \, {}_0 \mathcal{J}_1^\alpha  \rho_{\alpha s} (t) \label{qw1}
\end{align}
where we have used the fact that $0^\alpha\leq 2^\alpha\rho_{\alpha s}(1/2)\leq 1^\alpha$. Using again estimate \eqref{result1} and the generalised $\phi_{h-s}$ convexity of $f$ we have
\begin{align}
& \frac{1}{\Gamma(\alpha+1)}f\left(\frac{(2-\lambda)\phi(a) +\lambda \phi(b)}{2}  \right) =  \frac{1}{\Gamma(\alpha+1)}f\left(\frac{\lambda\phi(a)}{2} +\frac{(1-\lambda)\phi(a)+\lambda \phi(b)}{2}  \right)  \nonumber
\\& \leq \frac{2^\alpha\rho_{\alpha s}(1/2)}{\Gamma(\alpha +1)}\int_0^1 f\Big((1-t)\phi(a) +t[(1-\lambda)\phi(a)+ \lambda \phi(b)  ]\Big) (dt)^\alpha  \nonumber
\\& \leq \Big(f(\phi(a))+f[(1-\lambda)\phi(a)+\lambda\phi(b)]\Big) \,\,{}_0\mathcal{J}_1^\alpha \rho_{\alpha s} (t).\label{qw2}
\end{align}
Multiply the estimates \eqref{qw1} by $(1-\lambda)^\alpha$ and \eqref{qw2} by $\lambda^\alpha$, and add the resulting double estimates.  Calling upon Lemma \ref{ojoa}, we find
\begin{align}
& \frac{(1-\lambda)^\alpha}{\Gamma(\alpha +1)} f\left( \frac{(1-\lambda)\phi(a) +(\lambda+1)\phi(b)}{2} \right) +\frac{\lambda^\alpha}{\Gamma(\alpha+1)} f\left( \frac{(2-\lambda)\phi(a)+\lambda\phi(b)}{2} \right)  \nonumber
\\& \leq 2^\alpha\rho_{\alpha s}(1/2)\,\, {}_0 \mathcal{J}_1^\alpha\, f\big((1-t)\phi(a) +t\phi(a)  \big)  \nonumber
\\&\leq \Big(f[(1-\lambda)\phi(a)+\lambda\phi(b)]  +(1-\lambda)^\alpha f(\phi(b)) +\lambda^\alpha f(\phi(a)) \Big) {}_0\mathcal{J}_1\, \rho_{\alpha s}(t).  \label{c1}
\end{align}

We use the generalised $\phi_{h-s}$ convexity of $f$ to obtain the estimates
\begin{align}
&(1-\lambda)^\alpha f\left( \frac{(1-\lambda)\phi(a)+(\lambda+1)\phi(b)}{2} \right) +\lambda^\alpha f\left( \frac{(2-\lambda)\phi(a) +\lambda\phi(b)}{2}  \right)  \nonumber
\\& \geq \rho_{\alpha s}(1-\lambda) f\left( \frac{(1-\lambda)\phi(a)+(\lambda+1)\phi(b)}{2} \right) +\rho_{\alpha s}(\lambda) f\left( \frac{(2-\lambda)\phi(a) +\lambda\phi(b)}{2}  \right)  \nonumber
\\& \geq f\left( (1-\lambda) \left( \frac{(1-\lambda)\phi(a) +(\lambda+1)\phi(b)}{2} \right) +\lambda\left(  \frac{(2-\lambda)\phi(a) +\lambda\phi(b)}{2} \right) \right)  \nonumber
\\&=   f\left( \frac{\phi(a)+\phi(b)}{2} \right). \label{c2}
\end{align}

 Accordingly, the condition $\rho_{\alpha}(x) \leq x^\alpha$ for all $x,s \in[0,1]$ yields
\begin{align}
&\Big( f[(1-\lambda)\phi(a)+\lambda\phi(b)] +(1-\lambda)^\alpha f(\phi(b)) +\lambda^\alpha f(\phi(a))  \Big)\,{}_0\mathcal{J}^\alpha_1 \rho_{\alpha s}(t)  \nonumber
\\&\leq \Big(\rho_{\alpha s}(1-\lambda) f(\phi(a)) +\rho_{\alpha s}(\lambda) f(\phi(b)) +(1-\lambda)^\alpha f(\phi(b)) +\lambda^\alpha f(\phi(a))   \Big) \,{}_0\mathcal{J}^\alpha_1 \rho_{\alpha s}(t)  \nonumber
\\&= \Big( f(\phi(a)) +f(\phi(b)) \Big) \,{}_0\mathcal{J}^\alpha_1 \rho_{\alpha s}(t) . \label{c3}
\end{align}
To complete the proof, we combine estimates \eqref{c1}, \eqref{c2} and \eqref{c3} and note that a suitable change of variable yield ${}_0 \mathcal{J}_1^\alpha\, f\big((1-t)\phi(a) +t\phi(a)  \big)=  \frac{ {}_{\phi(a)}\mathcal{J}_{\phi(b)}^\alpha f(x) }{(\phi(b)-\phi(a))^\alpha} $.

\end{proof}

Next, we obtain average refinements of Hermite-Hadamard inequalities by using the generalised notion of convexity  and assuming that $\rho_{s}:[0,1]\rightarrow (0,\infty)$ is linear and multiplicative (c.f. 
\cite{fech, anji, hoff, roit}. Note that a multiplicative function $\rho$ satisfies $\rho(1)=1$. An example of a multiplicative, linear function is $\rho_s(x)=x$).

\begin{theo}\label{k9}
Let $f:I\rightarrow\mathbb{R}^\alpha$. Suppose that $f\in C_\alpha(I)$ is a generalised $\phi_{h-s}$ convex function and $\rho_{s}:[0,1]\rightarrow (0,\infty)$ is linear and multiplicative. Then for all $t\in[0,1]$, the following inequalities hold
\begin{align}
X_{[1-t,t,C]}\leq f((1-t)\phi(a)+t\phi(b))\leq X_{[1-t,t,c]}  \label{es2}
\end{align}
where c=\textnormal{ min}\{t,\,1-t\} , C=\textnormal{max}\{t,\,1-t\} and 
\[
X_{[u,v,\varrho]}:= \rho_{\alpha s}(u) f(\phi(a)) +\rho_{\alpha s} (v) f(\phi(b)) -\rho_{\alpha s}(2\varrho)\left(\frac{f(\phi(a))+f(\phi(b))}{2} -f\left( \frac{\phi(a)+\phi(b)}{2} \right)  \right)
\] with $0\leq u,v,\varrho\leq 1$. Furthermore, the double inequality hold
\begin{align}
X_{[1,1,2C]} -f(\phi(x)) \leq f(\phi(a)+\phi(b)-\phi(x)) \leq X_{[1,1,2c]} -f(\phi(x)) \label{ajz}
\end{align}
where, in particular, $c=\textnormal{min}\left\{\frac{\phi(b)-\phi(x)}{\phi(b)-\phi(a)}, \frac{\phi(x)-\phi(a)}{\phi(b)-\phi(a)}   \right\}$  and $C=\textnormal{max}\left\{\frac{\phi(b)-\phi(x)}{\phi(b)-\phi(a)}, \frac{\phi(x)-\phi(a)}{\phi(b)-\phi(a)}   \right\}$.  

Moreover, the refined Hermite-Hadamard inequality also holds
\begin{align}
&\frac{1}{\Gamma(\alpha +1)}\int_0^1 X_{[1,\, 1,\, 2C_1+2C_2]}(dt)^\alpha \,\,  \nonumber
\\&\leq f\left( \phi(a) +\phi(b) -\frac{1}{2}(\phi(u)+\phi(v) )  \right)+\frac{{}_{\phi(v)}\mathcal{J}_{\phi(u)}f }{(\phi(v)-\phi(u))^\alpha}\,\, \nonumber
\\&\leq \frac{1}{\Gamma(\alpha +1)}\int_0^1 X_{[1,\, 1,\, 2c_1+2c_2]}(dt)^\alpha \,\,  \nonumber
\end{align}
where $C_1:=C_1(t)=(1-t)\textnormal{max}\left\{t, 1-t   \right\}$,  $C_2:=C_2(t)=t\,\textnormal{max}\left\{t, 1-t   \right\} $,  $c_1:=c_1(t)=(1-t)\textnormal{min}\left\{t, 1-t   \right\}$ and $c_2:=c_2(t)=t\,\textnormal{min}\left\{t, 1-t   \right\} $.
\end{theo}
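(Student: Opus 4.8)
All three assertions are the fractal--set version of the classical ``defect'' refinement of the Hermite--Hadamard inequality, and I would establish them in the order stated, each feeding the next. The only analytic input is generalised $\phi_{h-s}$ convexity \eqref{deff}; the hypothesis that $\rho_{s}$ be linear and multiplicative is used purely algebraically (a linear map with $\rho_s(0)=0$ and, by multiplicativity, $\rho_s(1)=1$ is the identity, so $\rho_{\alpha s}(x)=x^{\alpha}$, and in the arithmetic of $\mathbb R^{\alpha}$ the relations $\rho_{\alpha s}(\lambda)+\rho_{\alpha s}(1-\lambda)=1^{\alpha}$ and $2^{\alpha}\rho_{\alpha s}(2\varrho)=\rho_{\alpha s}(4\varrho)$ hold); this is precisely what lets the various $\rho_{\alpha s}$--terms be repackaged into the single symbol $X_{[u,v,\varrho]}$. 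Abbreviate $A=\phi(a)$, $B=\phi(b)$, $M=\tfrac{A+B}{2}$ and $D:=\tfrac{f(A)+f(B)}{2}-f(M)$; applying \eqref{deff} at $t=\tfrac12$ gives $f(M)\le \rho_{\alpha s}(\tfrac12)(f(A)+f(B))=\tfrac{f(A)+f(B)}{2}$, hence $D\ge 0^{\alpha}$.

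\textbf{The pointwise refinement \eqref{es2}.} By the symmetry $t\leftrightarrow 1-t$ (which swaps $A,B$ and fixes $M$) it suffices to treat $t\le\tfrac12$, so $c=t$, $C=1-t$. For the upper bound, realise $(1-t)A+tB$ as the $\phi$--convex combination $(1-2t)A+2tM$ of $A$ and $M$ (legitimate since $0\le 2t\le 1$), apply \eqref{deff}, substitute $f(M)=\tfrac{f(A)+f(B)}{2}-D$, and use the linearity and multiplicativity of $\rho_{s}$ to recombine the right--hand side into exactly $X_{[1-t,t,t]}=X_{[1-t,t,c]}$. For the lower bound, instead write $M=\tfrac{1}{2(1-t)}\,((1-t)A+tB)+\tfrac{1-2t}{2(1-t)}\,B$ (a $\phi$--convex combination since $t\le\tfrac12$), apply \eqref{deff} to $f(M)$, solve the resulting inequality for $f((1-t)A+tB)$, and recombine into $X_{[1-t,t,1-t]}=X_{[1-t,t,C]}$. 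That $X_{[1-t,t,C]}\le X_{[1-t,t,c]}$ then follows from $c\le C$, monotonicity of $\rho_{\alpha s}$, and $D\ge 0^{\alpha}$.

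\textbf{The symmetrised refinement \eqref{ajz}.} Set $t=\tfrac{\phi(x)-A}{B-A}\in[0,1]$, so that $\phi(x)=(1-t)A+tB$ and $\phi(a)+\phi(b)-\phi(x)=tA+(1-t)B$, and with this $t$ the quantities $c,C$ in \eqref{ajz} are exactly $\min\{t,1-t\}$ and $\max\{t,1-t\}$. Applying \eqref{es2} to $f(\phi(x))$ and, with $t$ and $1-t$ interchanged, to $f(\phi(a)+\phi(b)-\phi(x))$ gives $X_{[1-t,t,C]}\le f(\phi(x))\le X_{[1-t,t,c]}$ together with $X_{[t,1-t,C]}\le f(\phi(a)+\phi(b)-\phi(x))\le X_{[t,1-t,c]}$. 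The elementary identity $X_{[1-t,t,\varrho]}+X_{[t,1-t,\varrho]}=X_{[1,1,2\varrho]}$ (using $\rho_{\alpha s}(1-t)+\rho_{\alpha s}(t)=1^{\alpha}=\rho_{\alpha s}(1)$ and $2^{\alpha}\rho_{\alpha s}(2\varrho)=\rho_{\alpha s}(4\varrho)$) then yields $X_{[1,1,2C]}\le f(\phi(x))+f(\phi(a)+\phi(b)-\phi(x))\le X_{[1,1,2c]}$, which is \eqref{ajz} after transposing $f(\phi(x))$.

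\textbf{The integral refinement.} Parametrise $\phi(x)$ as it runs over $[\phi(u),\phi(v)]$ by $\phi(x)=(1-t)\phi(u)+t\phi(v)$, $t\in[0,1]$, and apply the operator ${}_{0}\mathcal J_{1}^{\alpha}$ to the sandwiched form of \eqref{ajz} obtained in the previous step; along this parametrisation the defect exponents are $C(t)=C_{1}(t)+C_{2}(t)=\max\{t,1-t\}$ and $c(t)=c_{1}(t)+c_{2}(t)=\min\{t,1-t\}$, so that the two outer terms become $\tfrac{1}{\Gamma(1+\alpha)}\int_{0}^{1}X_{[1,1,2C_{1}+2C_{2}]}(dt)^{\alpha}$ and $\tfrac{1}{\Gamma(1+\alpha)}\int_{0}^{1}X_{[1,1,2c_{1}+2c_{2}]}(dt)^{\alpha}$. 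On the middle I would use the change-of-variable identities for the local fractional integral, exactly as in the proof of Theorem \ref{p1} and of the representation Lemma \ref{ojoa}: one term becomes ${}_{0}\mathcal J_{1}^{\alpha}f((1-t)\phi(u)+t\phi(v))=\tfrac{{}_{\phi(v)}\mathcal J_{\phi(u)}f}{(\phi(v)-\phi(u))^{\alpha}}$, while ${}_{0}\mathcal J_{1}^{\alpha}f(\phi(a)+\phi(b)-\phi(x))$ is the local fractional average of $f$ over the interval obtained by reflecting $[\phi(u),\phi(v)]$ through $M$, whose midpoint is $\phi(a)+\phi(b)-\tfrac{\phi(u)+\phi(v)}{2}$; the generalised Hermite--Hadamard inequality \eqref{result1} applied on that reflected interval then bounds this average below by $f(\phi(a)+\phi(b)-\tfrac{\phi(u)+\phi(v)}{2})$ (the scaling constant $2^{\alpha}\rho_{\alpha s}(\tfrac12)\Gamma(1+\alpha)$ collapsing to $1$). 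Assembling these gives the claimed double inequality. I expect the real work to be bookkeeping rather than ideas: controlling the $\mathbb R^{\alpha}$--scalars and the $\Gamma$--factors produced by the local fractional integration, and, at the last step, orienting the Hermite--Hadamard estimate in the direction the double inequality requires — which is what dictates localising on the reflected interval $[\phi(a)+\phi(b)-\phi(v),\,\phi(a)+\phi(b)-\phi(u)]$ rather than on $[\phi(u),\phi(v)]$ itself.
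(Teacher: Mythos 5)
Your treatment of \eqref{es2} and \eqref{ajz} is essentially the paper's own argument: the upper bound in \eqref{es2} comes from writing $(1-t)\phi(a)+t\phi(b)=(1-2t)\phi(a)+2t\,\frac{\phi(a)+\phi(b)}{2}$, and your lower bound (expressing the midpoint as a convex combination of $(1-t)\phi(a)+t\phi(b)$ and $\phi(b)$ and solving, using multiplicativity of $\rho_s$) reproduces exactly the paper's intermediate estimate obtained via \eqref{u1}. Your derivation of \eqref{ajz} by adding the two sandwiches $X_{[1-t,t,C]}\le f(\phi(x))\le X_{[1-t,t,c]}$ and $X_{[t,1-t,C]}\le f(\phi(a)+\phi(b)-\phi(x))\le X_{[t,1-t,c]}$ is a cleaner packaging of what the paper does in \eqref{ana}, and under the paper's $\mathbb{R}^\alpha$-arithmetic conventions it is equivalent to it. Up to this point the proposal is fine.

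The gap is in the integral refinement. You propose to integrate \eqref{ajz} in its symmetrised form along $\phi(x)=(1-t)\phi(u)+t\phi(v)$, which puts in the middle the sum of two \emph{averages}: $\frac{{}_{\phi(v)}\mathcal J_{\phi(u)}f}{(\phi(v)-\phi(u))^\alpha}$ plus the local fractional average of $f$ over the reflected interval $[\phi(a)+\phi(b)-\phi(v),\,\phi(a)+\phi(b)-\phi(u)]$, and then to replace the latter by the point value $f\left(\phi(a)+\phi(b)-\frac{\phi(u)+\phi(v)}{2}\right)$ using \eqref{result1}. But \eqref{result1} only gives the one-sided bound (midpoint value $\lesssim$ average); substituting it makes the middle expression \emph{smaller}, so it preserves only the right-hand inequality $\ldots\le \frac{1}{\Gamma(\alpha+1)}\int_0^1 X_{[1,1,2c_1+2c_2]}(dt)^\alpha$. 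The left-hand inequality would require bounding the reflected average \emph{above} by the midpoint value, i.e.\ the reverse of Hermite--Hadamard, which is false for a (generalised) convex $f$; the upper branch of \eqref{result1} involves endpoint values, not the midpoint, so it cannot rescue this step. The paper avoids the problem by never averaging $f(\phi(a)+\phi(b)-\cdot)$ at all: it first proves the \emph{pointwise} two-parameter estimates \eqref{latter} and \eqref{latest} for $f\left(\phi(a)+\phi(b)-\frac12(\phi(x)+\phi(y))\right)$ (via \eqref{y6}, i.e.\ convexity in a second parameter together with the reverse bound built from \eqref{u1}), and only then integrates in $t$ under the symmetric substitution $\phi(x)=t\phi(u)+(1-t)\phi(v)$, $\phi(y)=(1-t)\phi(u)+t\phi(v)$, for which $\frac12(\phi(x)+\phi(y))\equiv\frac12(\phi(u)+\phi(v))$ is constant, so the $f$-at-a-point term survives the integration intact while only $\frac12[f(\phi(x))+f(\phi(y))]$ turns into $\frac{{}_{\phi(v)}\mathcal J_{\phi(u)}f}{(\phi(v)-\phi(u))^\alpha}$. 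You would need this two-parameter step (or an equivalent reverse pointwise estimate) to obtain the lower inequality. A secondary caution: your identification of the defect exponents, $c=c_1+c_2=\min\{t,1-t\}$ along the parametrisation of $[\phi(u),\phi(v)]$, silently equates the position of $\phi(x)$ inside $[\phi(a),\phi(b)]$ (which is what $c,C$ in \eqref{ajz} refer to) with its position inside $[\phi(u),\phi(v)]$; this needs to be addressed (or the roles of $u,v$ constrained) for the bookkeeping to close.
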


\begin{proof}
Let $t\in[0,\frac{1}{2}] $, then $c=t$. The linearity of $\rho_{ s}$ guarantees that
\begin{align}
X_{[1-t,t,c]} &=   \rho_{\alpha s}(1-2t)f(\phi(a)) +\rho_{\alpha s}(2t) f\left( \frac{\phi(a)+\phi(b)}{2}  \right)     \nonumber
\\&\geq f\left( (1-2t)\phi(a) +2t\left( \frac{\phi(a)+\phi(b)}{2} \right)  \right)       \nonumber
\\& = f((1-t)\phi(a)+ t\phi(b)).  \label{u2}
\end{align}
The case $t\in[\frac{1}{2}, 1]$ is proved by replacing $t$ by $1-t$ in the argument above. Next, we use the generalised $\phi_{h-s}$ convexity of $f$  to estimate 
\begin{align}
f(\phi(a))&= f\left[\frac{1}{t+1}((t+1)\phi(a)-t\phi(b) )+\frac{t}{t+1}\phi(b)   \right]  \nonumber
\\& \leq \rho_{\alpha s}\left(\frac{1}{t+1} \right) f((t+1)\phi(a) -t\phi(b)) +\rho_{\alpha s}\left( \frac{t}{t+1} \right) f(\phi(b)) .\nonumber
\end{align}
The latter together with the multiplicative property of $\rho_{ s}$   imply the estimate
\begin{align}
\rho_{\alpha s}(t+1) f(\phi(a)) -\rho_{\alpha s} (t) f(\phi(b)) \leq f((t+1) \phi(a) -t\phi(b)).\label{u1}
\end{align}
On the other hand, if $t\in[0,\frac{1}{2}]$ (the case $t\in[\frac{1}{2},1]$ follows by a similar argument) then $C= 1-t$. Calling upon estimate \eqref{u1}, we have
\begin{align}
 &f((1-t)\phi(a)+t\phi(b))=f\left((2-2t)\frac{\phi(a)+\phi(b)}{2} -(1-2t) f(\phi(b))   \right)  \nonumber
\\&\geq\rho_{\alpha s}(1+(1-2t)) f\left( \frac{\phi(a)+\phi(b)}{2} \right) -\rho_{\alpha s} (1-2t) f(\phi(b)) \nonumber
\\&=\rho_{\alpha s}(1-t)f(\phi(a)) +\rho_{\alpha s }(t) f(\phi(b)) -\rho_{\alpha s}(2C) \left(\frac{f(\phi(a))+ f(\phi(b))}{2}- f\left( \frac{\phi(a)+\phi(b)}{2}  \right)    \right) . \label{u3}
\end{align}
Combining estimates \eqref{u3} and \eqref{u2} proves the double inequality \eqref{es2}.

Choose $x\in[a,b]$ such that $\phi(a)\leq \phi(x)\leq \phi(b)$ and $\phi(x)=t\phi(a)+(1-t)\phi(b)$ where $t=\frac{\phi(b)-\phi(x)}{\phi(b)-\phi(a)}$. Note that $c=\textnormal{min}\left\{\frac{\phi(b)-\phi(x)}{\phi(b)-\phi(a)}, \frac{\phi(x)-\phi(a)}{\phi(b)-\phi(a)}   \right\}$. Applying estimate \eqref{u2} twice, we have
\begin{align}
f(\phi(a)+\phi(b)-\phi(x))&= f((1-t)\phi(a)+t\phi(b))              \nonumber
\\&\leq X_{[1-t,t,c]}= X_{[1-t,1-(1-t),c]}   \nonumber
\\& =X_{[1,1,c]} -\rho_{\alpha s}(t) f(\phi(a)) -\rho_{\alpha s} (1-t) f(\phi(b))   \nonumber
\\&\leq X_{[1,1,2c]} -f(\phi(x)). \label{ana}
\end{align}
Note that by using \eqref{u3} instead of \eqref{u2}, the following reverse estimate of \eqref{ana} holds

\[
X_{[1,1,2C]} -f(\phi(x)) \leq f(\phi(a)+\phi(b)-\phi(x)).
\]
Combining the latter with \eqref{ana} proves estimate \eqref{ajz}.

We extend \eqref{ana} to estimates with two parameters. The generalised $\phi_{h-s}$ convexity of $f$ implies
\begin{align}
f\left(  \phi(a) +\phi(b) -((1-t)\phi(x)+t\phi(y) )   \right) &= f\left((1-t)(\phi(a)+\phi(b) -\phi(x) )   +t(\phi(a)+\phi(b)-\phi(y) )   \right)   \nonumber
\\&\leq \rho_{\alpha s}(1-t) f(\phi(a)+\phi(b) -\phi(x))   +  \rho_{\alpha s}(t) f(\phi(a)+\phi(b) -\phi(y)) .  \nonumber
\end{align}
The latter together with \eqref{ana} yield
\begin{align}
f\left(  \phi(a) +\phi(b) -((1-t)\phi(x)+t\phi(y) )   \right) & \leq \rho_{\alpha s}(1-t) \left( X_{[1,1,2c]}-f(\phi(x))\right)   +  \rho_{\alpha s}(t) \left( X_{[1,1,2c]}-f(\phi(y))\right) \nonumber
\\&= X_{[1-t,t,2c_1]} +X_{[t,1-t,2c_2]} -  \rho_{\alpha s}(1-t) f(\phi(x))     -  \rho_{\alpha s}(t) f(\phi(y))      \label{y6}
\end{align}
where $c_1:=c_1(t)=(1-t)\textnormal{min}\left\{t, 1-t   \right\}$ and $c_2:=c_2(t)=t\,\textnormal{min}\left\{t, 1-t   \right\} $.
\vskip3mm
\noindent
The generalised $\phi_{h-s}$ convexity of $f$ implies
\begin{align}
&f\left( \phi(a) +\phi(b) -\frac{1}{2}(\phi(x)+\phi(y) )  \right)  \nonumber
\\&= f\left(\frac{1}{2}(\phi(a)+\phi(b) -((1-t)\phi(x) +t\phi(y) ) ) +\frac{1}{2}(\phi(a)+\phi(b) -(t\phi(x) +(1-t)\phi(y) ) )    \right)  \nonumber
\\&\leq \rho_{\alpha s}\left( \frac{1}{2} \right)f\left( \phi(a)+\phi(b) -((1-t)\phi(x) +t\phi(y) )  \right) +\rho_{\alpha s}\left( \frac{1}{2} \right)f\left( \phi(a)+\phi(b) -(t\phi(x) +(1-t)\phi(y) )  \right)  \nonumber
\end{align}

Combining the latter  with \eqref{y6} and using the multiplicative property of $\rho_s$, we have
\begin{align}
&f\left( \phi(a) +\phi(b) -\frac{1}{2}(\phi(x)+\phi(y) )  \right)  \nonumber
\\&\leq   X_{[1-t,t,2c_1]} +X_{[t,1-t,2c_2]}-\rho_{\alpha s} \left( \frac{1}{2}\right)(f(\phi(x))+ f(\phi(y)))  \nonumber
\\& =f(\phi(a)) +f(\phi(b)) -\frac{1}{2}[f(\phi(x))+f(\phi(y))] -4\rho_{\alpha s}(c_1+c_2)\left(\frac{f(\phi(a))+f(\phi(b))}{2} -f\left( \frac{\phi(a)+\phi(b)}{2} \right)  \right)
\label{latter}
\end{align}

Similarly, the following reverse estimate holds
 \begin{align}
&f\left( \phi(a) +\phi(b) -\frac{1}{2}(\phi(x)+\phi(y) )  \right)  \nonumber
\\&\geq f(\phi(a)) +f(\phi(b)) -\frac{1}{2}[f(\phi(x))+f(\phi(y))] -4\rho_{\alpha s}(C_1+C_2)\left(\frac{f(\phi(a))+f(\phi(b))}{2} -f\left( \frac{\phi(a)+\phi(b)}{2} \right)  \right)  \label{latest}
\end{align}
where $C_1:=C_1(t)=(1-t)\textnormal{max}\left\{t, 1-t   \right\}$ and $C_2:=C_2(t)=t\,\textnormal{max}\left\{t, 1-t   \right\} $.
Combining estimates \eqref{latter} and \eqref{latest} and applying a local fractional integration  w.r.t $t$ on $[0,1]$ yield

\begin{align}
&\frac{1}{\Gamma(\alpha +1)}\int_0^1 X_{[1,\, 1,\, 2C_1+2C_2]}(dt)^\alpha -\frac{{}_{\phi(v)}\mathcal{J}_{\phi(u)}f}{(\phi(v)-\phi(u))^\alpha}\,\,\,\,  \nonumber
\\&\leq f\left( \phi(a) +\phi(b) -\frac{1}{2}(\phi(u)+\phi(v) )  \right)  \nonumber
\\&\leq \frac{1}{\Gamma(\alpha +1)}\int_0^1 X_{[1,\, 1,\, 2c_1+2c_2]}(dt)^\alpha -\frac{{}_{\phi(v)}\mathcal{J}_{\phi(u)}f}{(\phi(v)-\phi(u))^\alpha}\,\,\,\,  \nonumber
\end{align}
where we have used the change of variables $\phi(x)=t\phi(u)+(1-t)\phi(v)$ and $\phi(y)=(1-t)\phi(u)+t\phi(v)$ with $u,v\in I$. This completes the proof.
\end{proof}

\subsection{Application to Probability}
Let $\chi$ be a random variable whose  probability density function $\mathfrak{p}:[t_1, t_2] \rightarrow [0,1]^\alpha$ is generalised $\phi_{h-s}$ convex with a cummulative distribution function given by
$${\bf P}_\alpha(\chi \leq x)=\mathcal{F}_\alpha(x) :=\frac{1}{\Gamma(\alpha +1)} \int_{t_1}^x \mathfrak{p}(\theta)\, (d\theta)^\alpha$$
and the fractional normalising condition $\frac{1}{\Gamma(\alpha+1)}\int_{-\infty}^{\infty} \mathfrak{p}(\theta)\, (d\theta)^\alpha =1$. Define the  functional 
$$\mathcal{E}_{\alpha s}(\chi):= \frac{1}{\Gamma(\alpha+1)}\int_{t_1}^{t_2} \rho_{\alpha s}(\theta)\, \mathfrak{p}(\theta)\, (d\theta)^\alpha$$
where $\rho_{\alpha s}:[0,\infty)\rightarrow [0,\infty)^\alpha$ is as  in the previous sections. With $\rho_{\alpha s}(\theta)=\theta^\alpha$ and $\rho_{\alpha s}(\theta)=\theta^{\alpha s} $, the functional $\mathcal{E}_{\alpha s}({\chi})$ reduces to  generalised expectation and $s$-moments of fractional order respectively. In particular, $\mathcal{E}_{\alpha s}(\chi)$ satisfies
$$\mathcal{E}_{\alpha s}(x) =\frac{1}{\Gamma(\alpha+1)}\int_{t_1}^x \theta^\alpha\, d\mathcal{F}_\alpha (\theta)= t_2^\alpha -\frac{1}{\Gamma(\alpha+1)} \int_{t_1}^{t_2} \mathcal{F}_\alpha (\theta) \, (d\theta)^\alpha.$$
 Interested readers may see \cite{ju, abd} for more information on probability using fractional calculus.

\begin{theo}
Under the assumption of Theorem \ref{p1}, the following inequalities hold
\begin{align}
\frac{2^{\alpha(s-1)}}{\Gamma(\alpha +1)} {\bf P}_\alpha\Big( \chi \leq \frac{\phi(a)+\phi(b)}{2} \Big) &\leq \frac{(\phi(b))^\alpha-\mathcal{E}_\alpha(\chi)}{(\phi(b)-\phi(a))^\alpha \Gamma(\alpha+1)} \nonumber
\\&\leq \frac{\Gamma(\alpha s +1)}{\Gamma(1+\alpha(s+1))}\Big( {\bf P}_\alpha(\chi\leq \phi(a)) + {\bf P}_\alpha(\chi\leq \phi(b))  \Big). \nonumber
\end{align}
\end{theo}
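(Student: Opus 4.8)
The plan is to specialise the generalised Hermite--Hadamard inequality \eqref{result1} to the cumulative distribution function $\mathcal{F}_\alpha$ and then read off its probabilistic content. I would apply Theorem~\ref{p1} with $f=\mathcal{F}_\alpha$: since $\mathfrak p$ is continuous its local fractional primitive $\mathcal{F}_\alpha$ lies in $C_\alpha(I)$, and ``under the assumption of Theorem~\ref{p1}'' is read here as: $\mathcal{F}_\alpha$ is a generalised $\phi_{h-s}$ convex function; I would moreover take $h\equiv 1$, so $\rho_{\alpha s}(t)=t^{\alpha s}$, which is forced if the constants are to come out as in the statement. Then the two constants in \eqref{result1} become explicit, $2^\alpha\Gamma(1+\alpha)\rho_{\alpha s}(1/2)=2^{\alpha(1-s)}\Gamma(1+\alpha)$ and ${}_0\mathcal{J}_1^\alpha\rho_{\alpha s}(t)={}_0\mathcal{J}_1^\alpha t^{\alpha s}=\Gamma(1+\alpha s)/\Gamma(1+\alpha(s+1))$ by the power rule for local fractional integrals, so \eqref{result1} reads
\begin{align}
\frac{2^{\alpha(s-1)}}{\Gamma(1+\alpha)}\,\mathcal{F}_\alpha\!\left(\tfrac{\phi(a)+\phi(b)}{2}\right)&\leq\frac{{}_{\phi(a)}\mathcal{J}_{\phi(b)}^{\alpha}\mathcal{F}_\alpha(x)}{(\phi(b)-\phi(a))^\alpha} \nonumber\\
&\leq\frac{\Gamma(1+\alpha s)}{\Gamma(1+\alpha(s+1))}\bigl(\mathcal{F}_\alpha(\phi(a))+\mathcal{F}_\alpha(\phi(b))\bigr). \nonumber
\end{align}

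It then remains to translate the three members. The outer two are immediate from ${\bf P}_\alpha(\chi\leq x)=\mathcal{F}_\alpha(x)$, which already produces the first and last members of the claimed chain. For the middle one I would invoke the identity recorded just before the theorem: a local fractional integration by parts of the primitive $\mathcal{F}_\alpha$ against $\theta\mapsto\theta^\alpha$ (using $\mathcal{F}_\alpha^{(\alpha)}=\mathfrak p$, $(\theta^\alpha)^{(\alpha)}=\Gamma(1+\alpha)$, and the endpoint data $\mathcal{F}_\alpha(\phi(b))=1^\alpha$, $\mathcal{F}_\alpha(\phi(a))=0^\alpha$ for the support $[t_1,t_2]=[\phi(a),\phi(b)]$) gives $\mathcal{E}_\alpha(\chi)=(\phi(b))^\alpha-\tfrac1{\Gamma(1+\alpha)}\int_{\phi(a)}^{\phi(b)}\mathcal{F}_\alpha(\theta)(d\theta)^\alpha$, i.e. ${}_{\phi(a)}\mathcal{J}_{\phi(b)}^{\alpha}\mathcal{F}_\alpha(x)=\bigl((\phi(b))^\alpha-\mathcal{E}_\alpha(\chi)\bigr)/\Gamma(1+\alpha)$. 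Inserting this turns the middle member into $\bigl((\phi(b))^\alpha-\mathcal{E}_\alpha(\chi)\bigr)/\bigl((\phi(b)-\phi(a))^\alpha\Gamma(1+\alpha)\bigr)$, and the chain is complete.

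The constant bookkeeping and the $C_\alpha$ check are routine. The step I expect to need care is the middle-term identification: one must push the $\Gamma(1+\alpha)$ factors correctly through the local fractional integration by parts — it is precisely $(\theta^\alpha)^{(\alpha)}=\Gamma(1+\alpha)$ that converts the integral of $\mathcal{F}_\alpha$ into the expectation $\mathcal{E}_\alpha(\chi)$ while leaving the normalising $1/\Gamma(1+\alpha)$ in place — and one should keep the support of $\mathfrak p$ identified with $[\phi(a),\phi(b)]$ so that the boundary values and the normalisation $\tfrac1{\Gamma(1+\alpha)}\int \mathfrak p=1$ are available. A secondary point worth stating explicitly is that Theorem~\ref{p1} is being invoked for $\mathcal{F}_\alpha$, so the generalised $\phi_{h-s}$ convexity hypothesis is carried by $\mathcal{F}_\alpha$ and not merely by the density $\mathfrak p$.
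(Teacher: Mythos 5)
Your proposal is correct and follows essentially the paper's own (one-line) argument: apply Theorem \ref{p1} to $f=\mathcal{F}_\alpha$ with $\rho_{\alpha s}(t)=t^{\alpha s}$, evaluate the constants via ${}_0\mathcal{J}_1^\alpha t^{\alpha s}=\Gamma(1+\alpha s)/\Gamma(1+\alpha(s+1))$ and $\rho_{\alpha s}(1/2)=2^{-\alpha s}$, and identify the middle term through the expectation identity $\mathcal{E}_\alpha(\chi)=(\phi(b))^\alpha-\frac{1}{\Gamma(1+\alpha)}\int_{\phi(a)}^{\phi(b)}\mathcal{F}_\alpha(\theta)\,(d\theta)^\alpha$ recorded before the theorem. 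Your explicit remarks that $h\equiv 1$ is forced by the stated constants and that the convexity hypothesis must be read as carried by $\mathcal{F}_\alpha$ (rather than the density $\mathfrak p$) make precise what the paper leaves implicit.
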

\begin{proof}
Follows easily by using Theorem \ref{p1} and the identity $\frac{1}{\Gamma(\alpha+1)}\int_0^1 t^{\alpha s}\, (dt)^\alpha -\frac{\Gamma(\alpha s+1)}{\Gamma(\alpha(s+1)+1)}=0^\alpha.$
\end{proof}

\small

\end{document}